\newtheorem{theorem}{Theorem}[section]
\newtheorem{lemma}[theorem]{Lemma}
\newtheorem{proposition}[theorem]{Proposition}
\theoremstyle{definition}
\theoremstyle{remark}
\newtheorem{remark}[theorem]{Remark}
\newtheorem{example}[theorem]{Example}
\newcommand{\Ass}{\operatorname{Ass}}
\newcommand{\Dim}{\operatorname{dim}}
\newcommand{\injdim}{\operatorname{inj.dim}}
\newcommand{\I}{\operatorname{I}}
\newcommand{\Spec}{\operatorname{Spec}}
\newcommand{\Ht}{\operatorname{ht}}
\newcommand{\Ext}{\operatorname{Ext}}
\newcommand{\Supp}{\operatorname{Supp}}
\newcommand{\Hom}{\operatorname{Hom}}
\newcommand{\depth}{\operatorname{depth}}
\newcommand{\fm}{\frak{m}}
\newcommand{\fp}{\frak{p}}
\newcommand{\fq}{\frak{q}}
\begin{document}

\author[M. Dorreh]{Mehdi Dorreh}

\address{ Iran National Science Foundation: INSF and Department of Mathematics, University Of Isfahan, P. O. Box: 81746-73444, Isfahan, Iran. 
}
\email{mdorreh@ipm.ir}

\title{On the injective dimension of $\mathscr{F}$-finite modules and holonomic $\mathscr{D}$-modules}

\subjclass[2010]{Primary 13N10. Secondary 13D45.}

\begin{abstract}
Let $R$ be a regular local ring containing a field $k$ of characteristic $p$ and $M$ be an $\mathscr{F}$-finite module. In this paper we study the injective dimension of $M$. We prove that $\Dim_R(M) -1 \leq \injdim_R(M)$.  If $R = k[[x_1,\ldots,x_n]]$ where $k$ is a field of characteristic $0$ we prove the analogous result for a class of holonomic $\mathscr{D}$-modules  which contains local cohomology modules.   
\end{abstract}

\maketitle

\section{Introduction}

Throughout this paper, $R$ is a commutative Noetherian ring with unit. If $M$ is an $R$-module and $\I \subset R$ is an ideal, we denote  the $i$-th local cohomology  of $M$ with support in $\I$ by $H_{\I}^i(M)$. 
 
In a remarkable paper, \cite{L},  Lyubeznik    used $\mathscr{D}$-modules to prove if $R$ is any regular ring containing a field of characteristic $0$ and  $\I$ is an ideal of $R$, then 
\begin{itemize}
\item[a)]$ H_{\fm}^{i}(H_{\I}^{i}(R))$ is injective for every maximal ideal $\fm$  of $R$. 
\item[b)] $\injdim_{R}(H_{\I}^{i}(R)) \leq \Dim_{R}(H_{\I}^i(R))$. 
\item[c)] For every maximal ideal $\fm$ of $R$ the set of associated primes of $H_{\I}^{i}(R)$ contained in $\fm$ is finite.
\item[d)] All the bass numbers of $H_{\I}^{i}(R)$ are finite.
\end{itemize} 
Here $\injdim_{R}(H_{\I}^{i}(R))$ stands for the injective dimension of $H_{\I}^i(R)$,
$\Dim_{R}(H_{\I}^i(R))$ denotes the dimension of the support of $H_{\I}^i(R)$ in $\Spec(R)$ and the $j$-th Bass number of an $R$-module $M$ with respect to a prime ideal $\fp$ is defined as $\mu^j(\fp,M) = \Dim_{k(\fp)} \Ext_{R_\fp}^j(k(\fp),M_\fp)$ where $k(\fp)$ is the residue field of $R_\fp$.

The analogous results had proved earlier for regular local ring of positive characteristic by Huneke and Sharp \cite{HS}, with using of Frobenius functor.

Later Lyubeznik \cite{L1} developed the theory of $\mathscr{F}$-modules over regular ring of char $ p >0$ and proved the same results in char $p > 0$. The theory of $\mathscr{F}$-modules turned out to be very effective. For example Lyubeznik and etc \cite{BBLSZ} used $\mathscr{D}$-modules over $\mathbb{Z}$ and $\mathbb{Q}$ along with the theory of $\mathscr{F}$-modules to prove if $R$ is a smooth $\mathbb{Z}$-algebra and $I$ an ideal of $R$  then the set of associated primes of local cohomology module $H_{I}^{i}(R)$ is finite. 

By Lyubeznik results, the injective dimension of $H_{\I}^i(R)$  is bounded by its dimension. More generally if $M$ is an $\mathscr{F}$-module over a regular ring of positive characteristic or is a $\mathscr{D}$- module over power series ring $k[[x_1,\ldots,x_n]]$ where $k$ is a field of char $0$, then the injective dimension of $M$  is bounded by its dimension, see \cite[Theorem 1.4]{L1} and \cite[Theorem 2.4 (b)]{L} . A question of  Hellus \cite{hel} asks when $\injdim_{R}(H_{\I}^{i}(R)) =\Dim_{R}(H_{\I}^i(R))$.
 He proved the equality $\injdim_{R}(H_{\I}^{i}(R)) =\Dim_{R}(H_{\I}^i(R))$ for a regular local ring $R$ which contains a field and cofinite local cohomology $H_{I}^i(R)$, see \cite[Corollary 2.6]{hel}. On the other hand, he presented  counterexamples for this equality in which $\injdim_{R}(H_{\I}^{i}(R)) =0 $ but $\Dim_{R}(H_{\I}^i(R)) =1$, see \cite[Example 2.9, 2.11]{hel}. Also for polynomial ring $ R = k[x_1,\ldots,x_n]$ with  field $k$ of characteristic zero, Puthenpurakal, \cite[Corollary 1.2]{puthen}, proved  $\injdim_{R}(H_{\I}^{i}(R)) = \Dim_{R}(H_{\I}^i(R))$.  
 
 In this paper, motivated by these results, we attempt to obtain  lower bound for the injective dimension of $\mathscr{F}$-modules and $\mathscr{D}$-modules. We succeed in this for a subclass of $\mathscr{F}$-modules called $\mathscr{F}$-finite and subclass of $\mathscr{D}$-modules which contains local cohomology modules. In fact we prove that

 \begin{theorem}(Theorem \ref{14})\label{0}
 Let $(R,\fm)$ be a regular local ring which contains a field. Let $\I$ be an ideal of $R$.  The following hold.
 \begin{itemize}
 \item[(i)] Assume characteristic of $R$ is $p >0$ and $M$ is an $\mathscr{F}$-finite module.
 Then $\;\;\;\;\;\dim_R M-1 \leq\injdim_R M$.
 \item[(ii)]Assume characteristic of $R$ is $0$ and $M = H_{\I}^i(R)_{f} $ for some $f \in R$. Then $\;\;\;\;\;\dim_R M-1 \leq\injdim_R M$.
 \end{itemize}
 \end{theorem}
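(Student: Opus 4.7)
Set $d = \dim_R M$. The inequality is vacuous for $d \leq 1$, so assume $d \geq 2$; the goal is to produce a nonzero Bass number in degree $d-1$. I would proceed by induction on $d$.

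As starting data, pick a minimal prime $\fq$ of $\Supp M$ with $\dim R/\fq = d$. Any minimal prime of a support is associated (apply the cyclic-submodule argument to $M_\fq$, whose support in $\Spec R_\fq$ is $\{\fq R_\fq\}$), so $\mu^0(\fq, M) > 0$. Under the hypotheses of the theorem one knows considerably more: in part (i), the Huneke--Sharp/Lyubeznik classification identifies the $\mathscr{F}$-finite module $M_\fq$, supported at the maximal ideal of $R_\fq$, with a finite direct sum of copies of $E_{R_\fq}(k(\fq))$; in part (ii), the analogous characteristic-zero theorem of Lyubeznik \cite{L} gives the same description for $(H^i_\I(R)_f)_\fq$, which stays in the hypothesized class because local cohomology commutes with the flat localization at $\fq$.

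For the inductive step, use that $\Supp M$ is closed under specialization: since $\fq \in \Supp M$, every chain $\fq = \fq_0 \subsetneq \fq_1 \subsetneq \cdots \subsetneq \fq_{d-1} \subsetneq \fm$ in $V(\fq)$ lies entirely in $\Supp M$. Set $\fp = \fq_{d-1}$. Then $\Ht(\fm/\fp) = 1$, $\fp \in \Supp M$, and $\dim_{R_\fp} M_\fp = d - 1$ (any minimal prime of $\Supp M$ contained in $\fp$ has height $\geq n - d$, which pins the dimension count). The module $M_\fp$ over $R_\fp$ remains $\mathscr{F}$-finite in part (i), and is of the form $H^i_{J}(R_\fp)_{f'}$ in part (ii), so the inductive hypothesis yields $\injdim_{R_\fp} M_\fp \geq d - 2$, and hence $\mu^{d-2}(\fp'', M) > 0$ for some $\fp'' \subseteq \fp$.

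The main obstacle is bridging from $\mu^{d-2}(\fp'', M) > 0$ to a nonzero Bass number in degree $d-1$: the classical Bass inequality ``$\mu^j(\fp, M) \neq 0$ and $\Ht(\fp'/\fp) = 1$ imply $\mu^{j+1}(\fp', M) \neq 0$'' breaks down outside the finitely generated setting---the fraction field $K = R_x$ of a DVR is $\mathscr{F}$-finite, has $\mu^0((0), K) = 1$, yet $\mu^1(\fm, K) = 0$---and Hellus's counterexamples in \cite{hel} show the bound $d-1$ is sharp, leaving no slack to spare. I plan to close this gap by combining a short exact sequence of $\mathscr{F}$-finite (respectively holonomic) modules built from $M$ via an appropriate $\Gamma$-functor, whose quotient has strictly smaller dimension, with the long exact sequence of $\Ext^*_R(R/\fm, -)$; the finiteness of all Bass numbers in the class (Lyubeznik) rules out degeneracies in that sequence and should permit the nonvanishing to be transferred up exactly one Bass degree. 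For part (ii) one needs the additional observation that the identity $H^i_\I(R)_{fg} = (H^i_\I(R)_f)_g$ keeps us inside the hypothesized class throughout each of the constructions above.
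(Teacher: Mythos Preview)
Your inductive skeleton—localize at a prime $\fp$ of height $\dim R-1$ with $\dim_{R_\fp}M_\fp=d-1$, apply the inductive hypothesis to get $\mu^{d-2}(\fp'',M)\neq 0$ for some $\fp''\subseteq\fp$—is exactly the paper's setup. But the ``bridging'' step you flag as the main obstacle is the entire content of the theorem, and your plan for it does not work.

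The paper closes the gap not by climbing Bass degrees but by a \emph{height constraint on top-degree Bass numbers}: Proposition~\ref{12} shows that whenever $c=\injdim_R M$ and $\mu^c(\fq,M)\neq 0$, one must have $\Ht_R\fq\geq\dim R-1$. Taking the contrapositive, once you have produced $\fq\subsetneq\fp$ (so $\Ht_R\fq\leq\dim R-2$) with $\mu^{d-2}(\fq,M)\neq 0$, it follows immediately that $d-2<\injdim_R M$. The substance of the paper lies in proving Proposition~\ref{12}: one shows (Lemmas~\ref{5}, \ref{6}, \ref{10}) that $E_R(R/\fq)$ is \emph{not} $\mathscr{F}$-finite (resp.\ holonomic) when $\Ht_R\fq\leq\dim R-2$, and then that a nonzero top-degree Bass number at such a $\fq$ would force $E_R(R/\fq)^s\cong H^c_\fq(M)_\fq$ to occur as an $\mathscr{F}$-finite (resp.\ holonomic) quotient $H^c_\fq(M)/\Gamma_{\fq_1\cdots\fq_m}(H^c_\fq(M))$, a contradiction. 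The paper also argues (via a further localization $M_g$) that one can always arrange $\fq\subsetneq\fp$ rather than $\fq=\fp$.

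Your proposed replacement—an unspecified $\Gamma$-functor short exact sequence together with the long exact sequence of $\Ext^*_R(R/\fm,-)$ and finiteness of Bass numbers—has concrete problems. First, $\Ext^*_R(R/\fm,-)$ only records Bass numbers at $\fm$; it cannot see the nonvanishing at $\fp''$ you produced, so there is nothing to transfer. Second, in case~(ii) the module $\Gamma_J(H^i_\I(R)_f)$ is not of the form $H^{i'}_{\I'}(R)_{f'}$, so your sequence leaves the class and the inductive hypothesis no longer applies (your closing remark about $H^i_\I(R)_{fg}$ addresses only localizations, not torsion submodules). Third, even in case~(i) where the class is closed under $\Gamma_J$ and quotients, finiteness of Bass numbers does not by itself prevent the connecting maps in the Ext long exact sequence from vanishing; your own DVR example $K=R_x$ shows exactly this failure. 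The genuine input you are missing is the non-$\mathscr{F}$-finiteness (resp.\ non-holonomicity) of $E_R(R/\fq)$ for primes of small height.
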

  
 This manuscript is organized as follows. In section 2, we recall some definitions and properties of $\mathscr{D}$-modules and $\mathscr{F}$-modules. Later, in section 3, we discuss some lemmas and propositions which will help us in proving our main theorem. In section 4, we prove our main theorem.

 \section{Preliminaries}
 
Throughout this paper, we always assume that $R$ is a regular local ring which contains a field. In this section we review the theory of $\mathscr{D}$-modules and $\mathscr{F}$-modules and state two usefule lemmas.   
 
\textbf{$\mathscr{D}$-modules.}
Let $k$ be a field of characteristic $0$ and let $R$ denote the formal power series ring $k[[x_1,\ldots,x_n]]$ in $n$ variables over $k$. Let $\mathscr{D} = \mathscr{D}(R,k)$ denote the subring of the $k$-vector space endomorphisms of $R$ generated by $R$ and 
 the usual differential operators $\delta_1,\ldots,\delta_n$, defined formally, so that $\delta_if = \frac{\partial f}{\partial x_i}$. We simply say $\mathscr{D}$-modules for left $\mathscr{D}(R,k)$ modules. $\mathscr{D}(R,k)$ is left and right Noetherian \cite[Lemma 3.1.6]{bjork}. This implies that every finitely generated $\mathscr{D}$-module is Noetherian. 
 The natural action of $\mathscr{D}(R,k)$ on $R$ makes $R$ as a $\mathscr{D}$-module. In addition if $M$ is a $\mathscr{D}$-module and $S \subset R$ is a multiplicative system of elements, using the quotient rule,   $M_S$ carries a natural structure of $\mathscr{D}$-module. Let $\I$ be an ideal of $R$. The \v{C}ech complex on a generating set for $\I$ is a complex of $\mathscr{D}$-modules; it then follows that each local cohomology module $H_{\I}^i(R)$ is a $\mathscr{D}$-module.
 
 We will use the following several times in this paper. 
\begin{remark}\label{1}
\begin{itemize}
 Adopt the above notations.
 \item[a)] Let $M$ be a $\mathscr{D}$-module. Then $\injdim_R M \leq \dim_R M$ \cite[Theorem 2.4(b)]{L}.
 \item[b)] Let $M$ be a $\mathscr{D}$-module and $I$ be an ideal of $R$. Then $H_{I}^i(M)$ have a natural structure of $\mathscr{D}$-modules \cite[Example 2.1(iv)]{L}. In particular $\Gamma_I(M)$ is a $\mathscr{D}$-submodule of $M$ where $\Gamma_I$ is the $I$-torsion functor .
\item[c)] Let $\fp$ be a prime ideal of $R$ and let   $E_R(R/\fp)$ denote  the injective envelope of $R/\fp$. Assume $\Ht_R (\fp) = d$. Recall that $E_R(R/\fp) = H_{\fp}^d(R)_\fp$. It follows that $E_R(R/\fp)$ is a $\mathscr{D}$-module and the natural inclusion $H_{\fp}^d(R) \to E_R(R/\fp)$ is $\mathscr{D}(R,k)$-linear. 
\item[d)] Let $(S,\fm)$ be a regular local ring which contains a field of characteristic zero. We denote by $\hat{S}$ the completion of $S$ with respect to the  maximal ideal $\fm$. By Cohen structure theorem $\hat{S} = k[[x_1,\ldots,x_n]]$ where $k$ is a field of characteristic zero. Let $\fp$ be the prime ideal of $S$ such that $\Ht_S(\fp) = d$. Recall that $E_S(S/\fp) = H_{\fp}^d(S)_{\fp}$. Then $ E_S(S/\fp) \otimes_{S} \hat{S} \cong H_{\fp \hat{S}}^d(\hat{S})_{\fp}$, see \cite[Theorem 4.3.2]{BS}. Hence $E_S(S/\fp) \otimes_{S} \hat{S}$ has a natural structure of $\mathscr{D}(\hat{S},k)$-module.

\end{itemize}   
 \end{remark} There exists a remarkable class of finitely generated $\mathscr{D}$-modules, called holonomic $\mathscr{D}$-modules. See  \cite[Definition 7.12]{bjork} for a definition of a holonomic $\mathscr{D}$-module. 
 \begin{remark}\label{2}
 Some of the properties of holonomic modules are as follows:
 \begin{itemize}
\item[a)] $R$ with its natural structure of $\mathscr{D}(R,k)$-module is holonomic\cite[Theorem 3.3.2]{bjork}. 
\item[b)] If $M$ is holonomic and $f\in R$, then $M_f$ is holonomic \cite[Theorem 3.4.1]{bjork}.
\item[c)] Let $M$ be a holonomic $\mathscr{D}$-module. Assume $\Ass_R M= \{\fp\}$ and $M$ is $\fp$-torsion. Then there exists $h \in  R \setminus \fp$ such that $\Hom_R(R/\fp,M)_h$ is finitely generated as an $R_h$-module \cite[Proposition 2.3]{puthen}.
\item[d)]The holonomic modules form an abelian subcategory of the category  of $\mathscr{D}$-modules, which is closed under formation of submodules, quotient modules and extensions \cite[2.2 c]{L}. So $H_{\I}^i(R)$ is a  holonomic $\mathscr{D}$-module. 
\item[e)] If $M$ is holonomic, then $H_I^i(M)$ is holonomic \cite[2.2 d]{L}.
\item[f)] If $M$ is holonomic, all the Bass numbers of $M$ are finite \cite[Theorem 2.4(d)]{L}.
\item[g)] If $M$ is holonomic, the set of the associated primes of $M$ is finite \cite[Theorem 2.4(c)]{L}.
\end{itemize}
\end{remark}
\textbf{$\mathscr{F}$-modules}. The notion of $\mathscr{F}$-modules was introduced by Lyubeznik in \cite{L1}. We collect some notations and preliminary results from \cite{L1}. Let $R$ be a regular ring containing a field of characteristic $p >0$. Let $R^ \prime$ be the additive group of $R$ regarded as an $R$-bimodule with the usual left $R$-action and with the right $R$-action defined by $r^\prime r = r^p r^\prime$ for all $r \in R$, $ r^\prime \in R^\prime$. For an $R$-module $M$, define $F(M) = R^{\prime} \otimes_R M$ ; we view this as an $R$-module via the left $R$-module structure on $R^\prime$.

An $\mathscr{F}_R$-module $M$ is an $R$-module $M$ with an $R$-module isomorphism $\theta : M \to F(M)$ which is called the structure morphism of $M$. We will abbreviate $\mathscr{F}_R$ to $\mathscr{F}$ for the sake of readability (if this causes no confusion). A homomorphism of $\mathscr{F}$-modules is an $R$-module homomorphism $f : M \to M^\prime$ such that the following diagram commutes (where $\theta$ and $\theta^\prime$ are the structure morphisms of $M$ and $M^\prime$).   
\[\xymatrix{
M \ar[r]^{f} \ar[d]^{\theta} & M^\prime  \ar[d]^{\theta^\prime} \\
F(M) \ar[r]^{F(f)} & F(M^\prime)
}\] 
It is not hard to see that the category of $\mathscr{F}$-modules is abelian. 
\begin{remark}\label{3}
 Some of the properties of $\mathscr{F}$-modules are as follows:
 \begin{itemize}
\item[a)] The ring $R$ has a natural $\mathscr{F}$-module structure \cite[Example 1.2(a)]{L1}.
\item[b)] Let $I$ be an ideal of $R$ and $M$ be an $\mathscr{F}$-module. Then an $\mathscr{F}$-module structure on an $R$-module $M$ induces an $\mathscr{F}$-module structure on the local cohomology module $H_{I}^i(M)$. In particular $\Gamma_I(M)$ is an $\mathscr{F}$-submodule of $M$ \cite[Example 1.2(b)]{L1}.
\item[c)] If $M$ is an $\mathscr{F}$-module and $0 \to M \to E^{\bullet}$ is the minimal injective resolution of $M$ in the category of $R$-modules, then each $E^i$ acquires a structure of $\mathscr{F}$-module such that the resolution becomes a complex of $\mathscr{F}$-modules and $\mathscr{F}$-module homomorphisms \cite[Example 1.2($b^{\prime\prime}$)]{L1}. 
\item[d)] Let $M$ be an $\mathscr{F}$-module. Then $\injdim_R M \leq \Dim_R  M$ \cite[Theorem 1.4]{L1}. 
\item[e)] Let $M$ be an $\mathscr{F}$-module and $S \subset R$ be a multiplicative set. Then $M_S$ has a natural structure of $\mathscr{F}$-module such that the natural localization map $M \to M_S$ is the $\mathscr{F}$-module homomorphism \cite[Proposition 1.3 (b)]{L1}.  
\end{itemize}
\end{remark}
There exists an important  class of $\mathscr{F}$-modules, called $\mathscr{F}$-finite modules. See \cite[Definition 2.1]{L1} for a definition of an $\mathscr{F}$-finite module.  
\begin{remark}\label{4}
 Some of the properties of $\mathscr{F}$-finite modules are as follows:
 \begin{itemize}
\item[a)] The $\mathscr{F}$-finite modules form a full abelian subcategory of the category of $\mathscr{F}$-modules which is closed under formation of submodules, quotient modules and extensions \cite[Theorem 2.8]{L1}.
\item[b)] If $M$ is  an $\mathscr{F}$-finite module, then $M_f$ is $\mathscr{F}$-finite, where $f \in R$ \cite[Proposition 2.9(b)]{L1}.
\item[c)] If $M$ is an $\mathscr{F}$-finite module and $I  $ is an ideal of $R$, then $ H_I^i(M)$ with its induced $\mathscr{F}$-module structure is $\mathscr{F}$-finite \cite[Proposition 2.10]{L1}. 
\item[d)] All the Bass numbers of an $\mathscr{F}$-finite module  $M$ are finite  \cite[Theorem 2.11]{L1}. 
\item[e)] The set of the associated primes of an  $\mathscr{F}$-finite module  $M$ is finite  \cite[Theorem 2.12]{L1}. 
\item[f)] If $M$ is  an $\mathscr{F}_{R}$-finite module, then $M_{\fp}$ is $\mathscr{F}_{R_\fp}$-finite, where $\fp \in \Spec(R)$ \cite[Proposition 2.9(a)]{L1}.
\end{itemize}
\end{remark}
For the convenience of the reader, we state the following proved facts.
\begin{lemma}\label{15}
Let $R$ be a Noetherian local ring which has a finitely generated injective module. Then $R$ is an Artinian ring.
\end{lemma}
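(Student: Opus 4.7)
The plan is to combine the structure theorem for injective modules over a Noetherian ring with Nakayama's lemma and Matlis duality. Let $M$ be a nonzero finitely generated injective $R$-module (the case $M=0$ being vacuous). By Matlis's classification, every injective $R$-module decomposes as a direct sum of indecomposable injectives of the form $E_R(R/\fp)$; since $M$ is finitely generated this sum must be finite, so $M \cong \bigoplus_{i=1}^{n} E_R(R/\fp_i)$ and each summand is itself finitely generated as a direct summand of $M$.

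I would next argue that each $\fp_i$ must equal the maximal ideal $\fm$. Fix one summand and write $E' := E_R(R/\fp_i)$. Since $E'$ carries a natural $R_{\fp_i}$-module structure, every $x \in R \setminus \fp_i$ acts invertibly on $E'$. If $\fp_i \subsetneq \fm$, choosing $x \in \fm \setminus \fp_i$ gives $xE' = E'$ with $x \in \fm$, and Nakayama's lemma (applied to the finitely generated $E'$) forces $E' = 0$, a contradiction. Hence $M \cong E^n$ where $E := E_R(R/\fm)$ is finitely generated. Using the standard description $E = \bigcup_{k \geq 1}(0 :_E \fm^k)$, since each generator of $E$ lies in some $(0:_E\fm^{k_i})$, taking $k = \max_i k_i$ yields $\fm^k E = 0$.

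It remains to deduce $\fm^k = 0$. For this I would pass to the $\fm$-adic completion $\hat R$: as an $R$-module, $E$ coincides with $E_{\hat R}(\hat R/\hat\fm)$, and over the complete local Noetherian ring $\hat R$ Matlis duality yields $\Hom_{\hat R}(E,E) \cong \hat R$, whence $\Ann_{\hat R}(E) = 0$. Therefore $\hat\fm^{k} = \fm^{k}\hat R = 0$, so $\hat R$ is Artinian, and consequently so is $R$ (since $R \hookrightarrow \hat R$ is faithfully flat and $\dim R = \dim \hat R = 0$). The main obstacle in the argument is this last step: the implication ``$E_R(R/\fm)$ finitely generated $\Rightarrow R$ Artinian'' is not purely formal, and the cleanest route goes through faithfulness of the injective hull over the completion, which is precisely Matlis's theorem. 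The earlier steps are essentially routine applications of the structure theorem and Nakayama.
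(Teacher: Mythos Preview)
Your proof is correct and takes a genuinely different route from the paper's. The paper's argument is two lines: by \cite[Theorem~3.1.17]{BH} one has $\depth R = \injdim_R M = 0$, and then the (now proved) Bass conjecture---that a Noetherian local ring admitting a nonzero finitely generated module of finite injective dimension is Cohen--Macaulay---gives $\dim R = \depth R = 0$. Your approach instead reduces, via Matlis's structure theorem and Nakayama, to the claim that $E_R(R/\fm)$ finitely generated forces $R$ Artinian, and then settles this with Matlis duality over the completion. Your argument is considerably more elementary and self-contained, since the Bass conjecture (ultimately resting on the New Intersection Theorem) is a deep result; the paper's proof is shorter only because it outsources the hard work. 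One small remark: the case $M=0$ is not ``vacuous''---with $M=0$ the hypothesis holds trivially while the conclusion fails in general---so the lemma must be read with an implicit nonzero hypothesis, as both proofs in fact do.
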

\begin{proof}
By \cite[Theorem 3.1.17]{BH}, $\depth R =0$. Also well-known proved conjecture of Bass implies that $R$ is Cohen-Macaulay. Then $\dim R =0$.
\end{proof}
\begin{lemma}\label{16}
Let $R \to S $ be a faithfully flat map of  Noetherian  rings. Then an   $R$ module $L$ is finitely generated if and only if $L\otimes_R S$ is finitely genrated as an $S$-module. 
\end{lemma}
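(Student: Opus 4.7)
The forward direction is immediate: if $l_1,\ldots,l_n$ generate $L$ as an $R$-module, then $l_1\otimes 1,\ldots,l_n\otimes 1$ generate $L\otimes_R S$ as an $S$-module, and this uses nothing beyond flatness (in fact nothing at all). So the whole content of the lemma is the reverse implication, and that is what I would focus on.

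For the reverse direction, the plan is a standard descent argument. Assume $L\otimes_R S$ is finitely generated over $S$, say by elements $y_1,\ldots,y_m$. Each $y_i$, being an element of $L\otimes_R S$, can be written as a finite sum $y_i=\sum_j l_{ij}\otimes s_{ij}$ with $l_{ij}\in L$ and $s_{ij}\in S$. Let $L'$ denote the $R$-submodule of $L$ generated by the finite collection $\{l_{ij}\}$; this is a finitely generated $R$-submodule of $L$. I want to show $L'=L$.

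Consider the short exact sequence $0\to L'\to L\to L/L'\to 0$. Since $S$ is flat over $R$, tensoring with $S$ preserves exactness, and in particular the map $L'\otimes_R S\to L\otimes_R S$ is injective. By the very choice of $L'$, its image contains all the $y_i$, hence contains a generating set for $L\otimes_R S$ over $S$; thus $L'\otimes_R S\to L\otimes_R S$ is also surjective, hence an isomorphism. From the exact sequence
\begin{equation*}
0\lo L'\otimes_R S\lo L\otimes_R S\lo (L/L')\otimes_R S\lo 0
\end{equation*}
we conclude $(L/L')\otimes_R S=0$.

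Now invoke faithful flatness: because $R\to S$ is faithfully flat, an $R$-module $N$ satisfies $N\otimes_R S=0$ only if $N=0$. Applied to $N=L/L'$ this gives $L=L'$, so $L$ is finitely generated. The only real ``step'' is the choice of $L'$ from the $S$-module generators, and the only place faithful flatness (as opposed to mere flatness) enters is the very last sentence; everything else follows from flatness and the definitions. There is no serious obstacle here, which is presumably why the authors state the lemma without proof.
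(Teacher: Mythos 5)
Your argument is correct and is the standard faithfully flat descent proof; the paper itself does not give a proof but simply cites \cite[Proposition 3.3]{puthen}, and the argument given there is essentially the same as yours. One small remark: your proof nowhere uses the Noetherian hypothesis stated in the lemma, so your argument in fact establishes the statement for arbitrary commutative rings.
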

\begin{proof}
See \cite[Proposition 3.3]{puthen}.
\end{proof}

\section{Preliminary  lemmas}
In this section, our objective is to prove Proposition \ref{12}  which will enable us to prove the main theorem in the next section. Let $(R,\fm)$ be a local ring and $M$ be an $R$-module. By $\depth_R(M)$, we mean the length of the maximal $M$-regular sequence in $\fm$.
\begin{lemma}\label{5}
Let $k$ be a field of characteristic zero and $R = k[[x_1,\ldots,x_n]]$. Let $\fp$ be a prime ideal of $R$ of height less than $n-1$. Then $E_R(R/\fp)$ is not a holonomic $\mathscr{D}$-module.
\end{lemma}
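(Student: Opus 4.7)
The plan is to argue by contradiction: assume $E_R(R/\fp)$ is holonomic and derive a contradiction using Remark \ref{2}(c). Since $\Ass_R E_R(R/\fp) = \{\fp\}$ and $E_R(R/\fp)$ is $\fp$-torsion, Remark \ref{2}(c) applied to $M = E_R(R/\fp)$ yields $h \in R \setminus \fp$ such that $\Hom_R(R/\fp, E_R(R/\fp))_h$ is a finitely generated $R_h$-module. Using the identification $E_R(R/\fp) = E_{R_\fp}(k(\fp))$ and the Matlis computation of the socle of the injective hull of the residue field, one has $\Hom_R(R/\fp, E_R(R/\fp)) \cong k(\fp)$; since $h \notin \fp$ acts invertibly on $k(\fp)$, its localization at $h$ is still $k(\fp)$. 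Thus the assumption forces $k(\fp)$ to be finitely generated as an $R_h$-module.

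Because $\fp R_h$ annihilates $k(\fp)$, this is equivalent to $k(\fp)$ being finitely generated over the subring $R_h/\fp R_h \cong (R/\fp)[1/\bar h]$ of $k(\fp)$, where $\bar h$ denotes the image of $h$ in $R/\fp$. But $k(\fp)$ is the fraction field of this Noetherian domain, and a pooling-of-denominators argument shows that if the fraction field of any Noetherian domain $A$ is a finitely generated $A$-module, then $\mathrm{Frac}(A) = A[1/b]$ for some $b \in A$; consequently every nonzero prime of $A$ contains $b$, which by Krull's principal ideal theorem leaves $A$ with only finitely many height-one primes.

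The main obstacle is therefore to check that $(R/\fp)[1/\bar h]$ cannot satisfy this conclusion, equivalently that some height-one prime of $R/\fp$ avoids $\bar h$. The hypothesis $\Ht \fp < n-1$ gives $\dim R/\fp \geq 2$, and the local Noetherian domain $R/\fp$ then has infinitely many height-one primes: otherwise prime avoidance would produce an element in the maximal ideal of $R/\fp$ (which has height at least two and so is contained in no height-one prime) lying outside the union of the supposed finitely many height-one primes, contradicting Krull's principal ideal theorem applied to the ideal it generates. Since $\bar h \neq 0$ is contained in only finitely many height-one primes of $R/\fp$ (the minimal primes of $(\bar h)$), some height-one prime avoids $\bar h$, delivering the required contradiction.
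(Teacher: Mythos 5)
Your proof is correct but takes a genuinely different route from the paper's after the common opening. Both begin by assuming $E_R(R/\fp)$ is holonomic and invoking Remark \ref{2}(c) to produce $h \notin \fp$ making $\Hom_R(R/\fp, E_R(R/\fp))_h$ finitely generated over $R_h$. The paper then picks a single prime $\fq \supsetneq \fp$ of height $n-1$ with $h \notin \fq$, observes that the further localization $\Hom_R(R/\fp, E_R(R/\fp))_\fq$ is a nonzero finitely generated \emph{injective} module over the local ring $R_\fq/\fp R_\fq$, and invokes Lemma \ref{15} (which ultimately rests on Bass's theorem) to force $R_\fq/\fp R_\fq$ to be Artinian, contradicting that it is a positive-dimensional domain. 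You instead identify $\Hom_R(R/\fp, E_R(R/\fp))$ explicitly with the Matlis socle $k(\fp)$ and argue directly that the fraction field of the Noetherian domain $(R/\fp)[1/\bar h]$ cannot be a finitely generated module over it because that domain has infinitely many height-one primes; this is more self-contained (it avoids Bass's theorem entirely) at the price of the socle computation and the pooling-of-denominators lemma. One small slip at the end: to contradict the conclusion that $(R/\fp)[1/\bar h]$ has only finitely many height-one primes you need \emph{infinitely many} height-one primes of $R/\fp$ to avoid $\bar h$, not just "some" one; this does follow immediately from the two facts you establish (that $R/\fp$ has infinitely many height-one primes while $\bar h$ lies in only finitely many of them), so the issue is purely one of phrasing rather than substance.
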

\begin{proof}
Suppose on the contrary $E_R(R/\fp)$ is a holonomic $\mathscr{D}$-module. It is well-known that $\Gamma_\fp(E_R(R/\fp)) = E_R(R/\fp)$ and $\Ass_R E(R/\fp) = \fp$. Then by  Remark \ref{2}(c), there exists $h \in R\setminus \fp$ such that $\Hom_R(R/\fp , E_R(R/\fp))_h$ is a finitely generated $R_h$-module. Pick $\fq \in \Spec (R)$ which contains $\fp$ such that $\Ht_R(\fq) = n-1$ and $h \notin \fq$. It follows that $ M := \Hom_R(R/\fp , E_R(R/\fp))_\fq$ is a non-zero finitely generated  $R_\fq$-module. On the other hand $M$ is an injective $R_\fq/\fp R_\fq$-module . Then, in view of Lemma \ref{15}, $R_\fq/\fp R_\fq$ is an Artininan ring.  This  contradicts with the fact that  $R_\fq /\fp R_\fq$ is a domain of dimension greater than one. 
\end{proof}
Let $\I$ be an ideal of a ring $R$. By $\min_R(\I)$, we mean the set of all minimal prime ideals of $\I$.
\begin{lemma}\label{6}
Let $(R,\fm)$ be a regular local ring of dimension $n$ which contains a field of characteristic zero. Assume $P \in \Spec(R)$ such that $\Ht_R(P) = d  \leq n-2$. Let $\hat{R}$ denote the completion of $R$ with respect to the maximal ideal $\fm$. In view of Remark \ref{1}(d) $E_{R}(R/P) \otimes _{R}{\hat{R}} $ has a natural structure of $\mathscr{D}(\hat{R},k)$-module where $k$ is a suitable coefficient field of $\hat{R}$. Then $E_{R}(R/P) \otimes _{R}{\hat{R}} $ is a non-holonomic $\mathscr{D}$-module.
\end{lemma}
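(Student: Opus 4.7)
The plan is to argue by contradiction. Assuming $N := E_R(R/P) \otimes_R \hat{R}$ is holonomic, I will produce a $\mathscr{D}(\hat{R},k)$-submodule of $N$ that is $\mathscr{D}$-isomorphic to $E_{\hat{R}}(\hat{R}/Q)$ for some prime $Q$ of $\hat{R}$ with $\Ht_{\hat{R}} Q = d \leq n-2$; since $\hat{R} = k[[x_1,\ldots,x_n]]$ by Cohen structure, Lemma \ref{5} will then yield the contradiction.

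The first step is to describe $N$ as an $\hat{R}$-module (and simultaneously as a $\mathscr{D}$-module). By Remark \ref{1}(d), $N \cong H^d_{P\hat{R}}(\hat{R})_P$, where the subscript denotes localization at the multiplicative set $R \setminus P$. Because $R \to \hat{R}$ is faithfully flat and $R$ is excellent (as a regular local ring containing a field), the minimal primes $Q_1,\ldots,Q_s$ of $P\hat{R}$ all contract to $P$, each has height $d$, and the formal fiber of $R \to \hat{R}$ at $P$ is zero-dimensional. Consequently, the extensions $Q_iA$ are precisely the maximal ideals of the semi-local Gorenstein ring $A := \hat{R}_{R\setminus P}$ of dimension $d$. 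Flat base change gives $N \cong H^d_{PA}(A)$; pairwise comaximality of the $Q_iA$ produces, via the standard direct-sum decomposition of torsion functors in a semi-local ring, a functorial isomorphism
\[
N \;\cong\; \bigoplus_{i=1}^s H^d_{Q_iA}(A) \;\cong\; \bigoplus_{i=1}^s E_{\hat{R}}(\hat{R}/Q_i),
\]
the last identification coming from local duality at each regular local $\hat{R}_{Q_i}$. By functoriality, this is a decomposition of $\mathscr{D}(\hat{R},k)$-modules in which each summand carries its natural $\mathscr{D}$-structure.

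Next, fix $Q := Q_1$. By Remark \ref{1}(b), $\Gamma_Q(N)$ is a $\mathscr{D}$-submodule of $N$; because all the $Q_i$ share the common height $d$, the containment $Q \subseteq Q_i$ forces $Q_i = Q$, so only the first summand of $N$ is $Q$-torsion. Hence $\Gamma_Q(N) \cong E_{\hat{R}}(\hat{R}/Q)$ as $\mathscr{D}(\hat{R},k)$-modules. If $N$ were holonomic, Remark \ref{2}(d) would make $\Gamma_Q(N)$ holonomic as well. But $\Ht_{\hat{R}} Q = d \leq n-2$, so Lemma \ref{5} asserts that $E_{\hat{R}}(\hat{R}/Q)$ is \emph{not} holonomic, delivering the desired contradiction.

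The principal technical obstacle will be justifying the decomposition $N \cong \bigoplus_i E_{\hat{R}}(\hat{R}/Q_i)$. The delicate pieces are (i) verifying that the maximal ideals of $\hat{R}_{R\setminus P}$ are exactly the extensions of the $Q_i$, which hinges on the zero-dimensionality of the formal fiber of $R$ at the non-maximal prime $P$ (valid because a regular local ring containing a field is excellent); (ii) the standard splitting of top local cohomology across pairwise comaximal maximal ideals of a semi-local ring; and (iii) local duality in each regular local $\hat{R}_{Q_i}$ to identify the local summand with the correct injective hull. Once these three pieces are in place, the reduction to Lemma \ref{5} is essentially immediate.
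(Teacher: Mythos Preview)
Your reduction to Lemma~\ref{5} is attractive, but the key step fails: the formal fiber of $R \to \hat{R}$ at $P$ is \emph{not} zero-dimensional in general, so your decomposition $N \cong \bigoplus_i E_{\hat{R}}(\hat{R}/Q_i)$ does not hold. Excellence only guarantees that formal fibers are geometrically regular, not that they are Artinian. Concretely, take $R=k[x,y]_{(x,y)}$, $n=2$, $P=(0)$, $d=0$. Then $\hat R=k[[x,y]]$, the unique minimal prime of $P\hat R$ is $Q=(0)$, and
\[
N \;=\; E_R(R)\otimes_R\hat R \;=\; k[[x,y]]_{\,k[x,y]\setminus\{0\}},
\]
which is a one-dimensional regular domain (it has height-one primes generated by transcendental power series such as $y-\sum_{i\ge 1}a_ix^i$). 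In particular $N$ is not injective over $\hat R$, so it cannot be a direct sum of injective hulls; and $\Gamma_Q(N)=N$ is certainly not $E_{\hat R}(\hat R/Q)=\operatorname{Frac}(\hat R)$. Thus both your step~(i) (that the $Q_iA$ exhaust $\operatorname{Max}(A)$) and the resulting identification $\Gamma_Q(N)\cong E_{\hat R}(\hat R/Q)$ break down. (Incidentally, ``regular local containing a field $\Rightarrow$ excellent'' is also not a safe blanket assertion, but even granting excellence the argument collapses as above.)

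This is exactly the obstruction the paper's proof is designed to circumvent. The paper never attempts to realize $E_{\hat R}(\hat R/Q)$ inside $N$. Instead it passes to the holonomic submodule $H^d_{\fq_1}(\hat R)_P=\Gamma_{\fq_1}(N)$ and invokes Remark~\ref{2}(c) to get an $h\notin\fq_1$ with $\Hom_{\hat R}(\hat R/\fq_1,H^d_{\fq_1}(\hat R)_P)_h$ finitely generated. The crucial extra idea is then to choose a height-$(d+1)$ prime $\fq\supset\fq_1$ with $h\notin\fq$ \emph{and} $\Ht_R(\fq\cap R)=d+1$ (this is where the careful selection of $\fq_1$ at the start is used), localize at $\fq$, and use faithful flatness of $R_{\fq\cap R}\to\hat R_\fq$ to descend finite generation to $\Hom_{R_{\fq\cap R}}(R_{\fq\cap R}/PR_{\fq\cap R},E_R(R/P)_{\fq\cap R})$. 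Lemma~\ref{15} then forces $R_{\fq\cap R}/PR_{\fq\cap R}$ to be Artinian, contradicting $\dim R_{\fq\cap R}/PR_{\fq\cap R}\ge 1$. Your approach would need either a genuinely different way to manufacture $E_{\hat R}(\hat R/Q)$ as a $\mathscr{D}$-subquotient of $N$, or to abandon that route and argue finiteness-and-descent as the paper does.
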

\begin{proof}
Recall that $E_{R}(R/P) \cong H_{P}^{d}(R)_{P}$ and $E_{R}(R/P) \otimes _{R}{\hat{R}} \cong H_{P}^{d}(R)_{P} \otimes_{R} \hat{R} \cong H_{P\hat{R}}^{d}(\hat{R})_{P}$. In view of Remark \ref{1}(d), $E_{R}(R/P) \otimes _{R}{\hat{R}} $ has a natural structure of $\mathscr{D}(\hat{R},k)$-module where $k$ is a field  of characteristic zero which is  contained in $\hat{R}$. We simply say $E_{R}(R/P) \otimes _{R}{\hat{R}} $ is a $\mathscr{D}$-module.  It is obvious that   $\Ht_{\hat{R}}(P\hat{R}) = d$. 
Let $\min_{\hat{R}}(P\hat{R}) = \{ \fq_1,\ldots,\fq_s\}$.   There are infinitely many primes $\fp \in \Spec(R)$  such that $\Ht_R(\fp) = d+1$ and $P \subsetneqq \fp$, see \cite[Theorem 31.2]{Mat}. For  such $\fp$, $ \Ht_{\hat{R}}(\fp \hat{R}) = d+1$ and $ \fp \hat{R} \cap R = \fp$. Thus without loss of generality we can assume that $\Ht_{\hat{R}}(\fq_1) = d$  and there are infinitely many primes $\fq \in \Spec(\hat{R})$  of height $d+1$  which contains $\fq_1$ and $\Ht_R(\fq \cap R) = d+1$ .

Suppose on the contrary that $ H_{P\hat{R}}^{d}(\hat{R})_{P}$ is holonomic.

\textbf{ Claim 1.}   $ H_{\fq_1}^{d}(\hat{R})_{P}$ is holonomic.

The composition of functors $\Gamma_{\fq_1}(-) = \Gamma_{\fq_1}(\Gamma_{P\hat{R}}(-))$ leads to the spectral sequence $E_2^{p,q} = H_{\fq_1}^p(H_{P\hat{R}}^q(\hat{R})) \Rightarrow H_{\fq_1}^{p+q}(\hat{R})$. It follows that $\Gamma_{\fq_1}(H_{P\hat{R}}^d(\hat{R})) = H_{\fq_1}^d(\hat{R})$.  Hence $H_{\fq_1}^d(\hat{R})$ is the $\mathscr{D}$-submodule of $ H_{P\hat{R}}^{d}(\hat{R})$. Therefore $ H_{\fq_1}^{d}(\hat{R})_{P}$ is a holonomic $\mathscr{D}$-module, see Remark \ref{2}(d). This yields the claim. 

\textbf{ Claim 2.} $\Ass_{\hat{R}}( H_{\fq_1}^{d}(\hat{R})_{P}) = \fq_1$.

 Indeed let $m/s \in  H_{\fq_1}^{d}(\hat{R})_{P} $ such that $m \in  H_{\fq_1}^{d}(\hat{R})$ and $s \in R\setminus P$. If $r \in \hat{R}$ such that $r . m/s =0 $, then 
 there exists  $r^{\prime} \in  R\setminus P \subseteq \hat{R} \setminus \fq_1$ such 
 that $r^{\prime} r m =0$. Keep in mind that $\Ass_{\hat{R}} (H_{\fq_1}^{d}(\hat{R})) = \fq_1$. So $r^{\prime} r \in \fq_1$ and thus $r \in \fq_1$. This yields the claim.
  
 Also $\Gamma_{\fq_1}(H_{\fq_1}^{d}(\hat{R})_P) = H_{\fq_1}^{d}(\hat{R})_P$. Then by Remark \ref{2}(c), there exists $ h \in \hat{R}\setminus \fq_1$ such that $\Hom_{\hat{R}}(\frac{\hat{R}} { \fq_1 \hat{R}}, H_{\fq_1}^{d}(\hat{R})_P)_h$ is  a finitely generated $\hat{R}_h$-module. Since $\fq_i \nsubseteq \fq_1$ for all $ 2 \leq i \leq s$, we can pick $t_i \in \fq_i \setminus \fq_1$ for all $ 2 \leq i \leq s$. Thus $t = t_2 \ldots t_s h \notin \fq_1$. Note that the set of minimal prime ideals of the ideal generated by $t$ and $\fq_1$ is finite. Then  by assumption on choosing $\fq_1$, we  can pick $\fq  \in \Spec(\hat{R})$  of height $d+1$ which contains $\fq_1$ and $t \notin \fq$ such that $\Ht_R(\fq \cap R)  = d+1$.  

Thus $\Hom_{\hat{R}_\fq}(\frac{\hat{R}_\fq} { \fq_1 \hat{R}_\fq},( H_{\fq_1}^{d}(\hat{R})_P)_\fq)$ is a finitely generated $\hat{R}_\fq$-module. 
Since $\min_{\hat{R}_\fq}(P \hat{R}_\fq) = \fq_1 \hat{R}_\fq$, then $  H_{\fq_1  \hat{R}_\fq}^{d}(\hat{R}_\fq) =  H_{P \hat{R}_\fq}^{d}(\hat{R}_{\fq})$.
 Also $ \frac{\hat{R}_\fq} { P\hat{R}_\fq}$ has a filtration of $\hat{R}_\fq$-modules such that quotients of it are  isomorph to $\frac{\hat{R}_\fq} { \fq_1 \hat{R}_\fq}$ or $\frac{\hat{R}_\fq} { \fq \hat{R}_\fq}$, as $\hat{R}_\fq$-module. Thus $\Hom_{\hat{R}_\fq}(\frac{\hat{R}_\fq} { P \hat{R}_\fq},( H_{P\hat{R}}^{d}\hat{R}_P)_\fq)$ is a finitely generated $\hat{R}_\fq$-module.
 
 Look at the faithfully flat  map $R_{\fq \cap R} \to \hat{R}_\fq$. We have following isomorphisms:
 \[\begin{array}{ll} \Hom_{R_{\fq \cap R}}(\frac{R_{\fq \cap R}}{PR_{\fq \cap R}} , (H_{P}^{d}(R)_P)_{\fq 
 \cap R}) \otimes_{R_{\fq \cap R}} \hat{R}_\fq \cong 
  \Hom_{\hat{R}_\fq}(\frac{R_{\fq 
 \cap R}}{PR_{\fq \cap R}}  \otimes_{R_{\fq \cap R}} \hat{R}_{\fq}, (H_{P}^{d}
 (R)_P)_{\fq \cap R} \otimes_{R_{\fq \cap R}} \hat{R}_\fq)\\
  \cong \Hom_{\hat{R}_\fq}
 ((R/P \otimes_{R} R_{\fq \cap R})\otimes_{R_{\fq \cap R}} \hat{R}_{\fq},((H_{P}
 ^{d}(R) \otimes_{R} R_P) \otimes_{R} R_{\fq \cap R}) \otimes_{R_{\fq \cap R}}  
 \hat{R}_\fq) \\
 \cong \Hom_{\hat{R}_\fq}
 (R/P \otimes_{R}  \hat{R}_{\fq},(H_{P}
 ^{d}(R) \otimes_{R} R_P) \otimes_{R}   
 \hat{R}_\fq) \cong \\\Hom_{\hat{R}_\fq}
 (R/P \otimes_{R}  (\hat{R} \otimes_{\hat{R}}\hat{R}_{\fq}),(H_{P}
 ^{d}(R) \otimes_{R} R_P) \otimes_{R}   
 (\hat{R} \otimes_{\hat{R}}\hat{R}_{\fq})) \cong \Hom_{\hat{R}_\fq}(\frac{\hat{R}_\fq} { P \hat{R}_\fq},( H_{P\hat{R}}^{d}(\hat{R})_P)_\fq).
 \end{array}\]
 Therefore, by virtue of Lemma \ref{16}, $ \Hom_{R_{\fq \cap R}}(\frac{R_{\fq \cap R}}{PR_{\fq \cap R}} ,E_{R}(R/P)_{\fq 
 \cap R}) \cong \Hom_{R_{\fq \cap R}}(\frac{R_{\fq \cap R}}{PR_{\fq \cap R}} , (H_{P}^{d}(R)_P)_{\fq 
 \cap R})$ is a non-zero finitely generated  $R_{\fq \cap R}$-module. So, by Lemma \ref{15}, $\frac{R_{\fq \cap R}}{PR_{\fq \cap R}}$ is an Artinian ring.  Again, it is a contradiction because  $\frac{R_{\fq \cap R}}{PR_{\fq \cap R}}$ is a domain of dimension greater than one.

\end{proof}
Next we want to establish analogous result such Lemma \ref{5} for characteristic $p >0$. To show this we need some lemmas.
\begin{lemma}\label{7}
 Let $R$ be a regular  local ring which contains a field and $\I$ be an ideal of $R$. Let $\injdim_R(H_{\I}^i(R)) = \Dim_R(H_{\I}^i(R)) = c$. If $\mu^c(\fp,H_{\I}^i(R)) \neq 0$ for $\fp \in \Spec(R)$ then $\fp$ is a maximal ideal of $R$.
 \end{lemma}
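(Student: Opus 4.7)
Set $M = H_{\I}^{i}(R)$ and write $d = \dim(R/\fp)$. The plan is to localize at $\fp$, use the nonvanishing Bass number to bound $\injdim_{R_\fp}(M_\fp)$ from below by $c$, then use the Lyubeznik injective-dimension bound over $R_\fp$ to convert this into a lower bound on $\dim_{R_\fp}(M_\fp)$, and finally compare with $\dim_R(M) = c$ via the catenary/equidimensional structure of the regular local ring $R$. The only possible reconciliation will force $d = 0$.

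First I would recall that Bass numbers commute with localization, so the hypothesis gives $\mu^c(\fp R_\fp, M_\fp) = \mu^c(\fp, M) \neq 0$, whence $\injdim_{R_\fp}(M_\fp) \geq c$. By flat base change, $M_\fp = H_{\I R_\fp}^{i}(R_\fp)$. In characteristic $p$, Remark \ref{4}(f) makes $M_\fp$ an $\mathscr{F}_{R_\fp}$-finite module, so Remark \ref{3}(d) applied over $R_\fp$ yields $\injdim_{R_\fp}(M_\fp) \leq \dim_{R_\fp}(M_\fp)$. In characteristic $0$, $M_\fp$ is still a local cohomology module of a regular local ring containing a field of characteristic $0$, so the same inequality follows from Lyubeznik's general theorem \cite[Theorem~2.4(b)]{L}. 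Either way,
\[
c \;\leq\; \injdim_{R_\fp}(M_\fp) \;\leq\; \dim_{R_\fp}(M_\fp).
\]

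Next, because $R$ is regular local, hence Cohen--Macaulay, catenary, and equidimensional, the formula $\Ht(\fq) + \dim(R/\fq) = \dim R$ holds for every prime $\fq$. For any $\fq \subseteq \fp$ this rearranges to $\dim(R/\fq) = d + \Ht(\fp/\fq) = d + \dim(R_\fp/\fq R_\fp)$. Taking the supremum over $\fq \in \Supp_R(M)$ with $\fq \subseteq \fp$ produces
\[
c \;=\; \dim_R M \;\geq\; d + \dim_{R_\fp}(M_\fp).
\]
Combining this with the previous inequality gives $c \leq c - d$, so $d \leq 0$, i.e.\ $\fp$ is the (unique) maximal ideal of $R$.

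The only genuinely delicate step is ensuring that the Lyubeznik injective-dimension bound is available for $M_\fp$. In characteristic $p$ this is automatic from the stability of the $\mathscr{F}_R$-finite class under localization; in characteristic $0$ one cannot invoke Remark \ref{1}(a) directly, since $R_\fp$ need not be a power series ring, but the more general form of \cite[Theorem~2.4(b)]{L} for arbitrary regular rings containing $\mathbb{Q}$ applies verbatim to $R_\fp$. Everything else is routine bookkeeping with the dimension formula in a Cohen--Macaulay local ring.
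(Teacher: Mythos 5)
Your proof is correct and takes essentially the same approach as the paper: localize at $\fp$, use the nonvanishing Bass number to force $\injdim_{R_\fp}(M_\fp)\geq c$, invoke Lyubeznik's injective-dimension bound over $R_\fp$, and compare with the dimension formula in the regular (hence catenary, equidimensional) local ring $R$ to conclude $\dim(R/\fp)=0$. The only minor nit is the citation in characteristic $0$: the applicable result for the general regular local ring $R_\fp$ is \cite[Theorem 3.4(b)]{L} (as in the paper's proof), not Theorem 2.4(b), though you clearly have the right theorem in mind.
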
 
 \begin{proof}
 Let $\Dim(R) = n$.
We suppose on the contrary $\Ht_R \fp \leq n-1 $. Thus $\Dim_{R_\fp}(H_{\I}^i(R))_\fp \leq c-1 $ . Since $\mu^c(\fp,H_{\I}^i(R))\neq 0$, we deduce that $\injdim _{R_\fp}( H_{\I}^i(R)_\fp) =c$. But this is impossible because  in view of \cite[Theorem 3.4(b)]{L} and \cite[Theorem 1.4]{L1}, we must have $ \injdim _{R_\fp}( H_{\I}^i(R)_\fp)  \leq dim_{R_\fp}(H_{\I}^i(R))_\fp$.  
 \end{proof}
 \begin{lemma}\label{8}
 Let $(R,\fm)$ be a  local ring of dimension $n$. Let $\hat{R}$ denote the completion of $R$ with respect to the maximal ideal $\fm$. Let $M$ be an $R$-module. Then $\Dim_R(M) = \Dim_{\hat{R}}(M \otimes_R \hat{R})$. 
 \end{lemma}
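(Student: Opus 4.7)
The plan is to establish the equality $\Dim_R M = \Dim_{\hat R}(M \otimes_R \hat R)$ by a prime-by-prime comparison of the two supports, exploiting that $R \to \hat R$ is faithfully flat and local, together with the standard identity $\dim R/\fp = \dim \hat R/\fp\hat R$ (completion preserving the Krull dimension of the finitely generated module $R/\fp$). Throughout, I interpret $\Dim_R M$ as $\sup\{\dim R/\fp : \fp \in \Supp_R M\}$.

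For $\Dim_R M \leq \Dim_{\hat R}(M \otimes_R \hat R)$, I would fix $\fp \in \Supp_R M$ and exhibit a $\fq \in \Supp_{\hat R}(M \otimes_R \hat R)$ with $\dim \hat R/\fq = \dim R/\fp$. The natural candidate is a minimal prime $\fq$ of $\fp \hat R$ attaining the maximum coheight, so that $\dim \hat R/\fq = \dim \hat R/\fp\hat R = \dim R/\fp$. Faithful flatness (going-down) gives $\fq \cap R = \fp$, whence $R_\fp \to \hat R_\fq$ is a flat local map and therefore faithfully flat. Hence $(M \otimes_R \hat R)_\fq \cong M_\fp \otimes_{R_\fp} \hat R_\fq$ is non-zero because $M_\fp \neq 0$, which places $\fq$ in the support of $M \otimes_R \hat R$.

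For the reverse inequality, I would take any $\fq \in \Supp_{\hat R}(M \otimes_R \hat R)$ and set $\fp := \fq \cap R$. The nonvanishing of $(M \otimes_R \hat R)_\fq \cong M_\fp \otimes_{R_\fp} \hat R_\fq$ forces $M_\fp \neq 0$, so $\fp \in \Supp_R M$. Since $\fp\hat R \subseteq \fq$, one has $\dim \hat R/\fq \leq \dim \hat R/\fp\hat R = \dim R/\fp$, and taking suprema over both sides yields the desired equality.

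There is no serious obstacle: the proof rests only on faithful flatness of completion, going-down, and the dimension identity for $R/\fp$ and its completion. The fact that $M$ is not assumed finitely generated causes no trouble, because the entire argument takes place localized at primes in the support and never requires a finite generation hypothesis on $M$ itself.
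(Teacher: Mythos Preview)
Your proposal is correct and follows essentially the same route as the paper: both directions are proved by the same prime-by-prime comparison using faithful flatness of $R \to \hat R$, the identity $\dim R/\fp = \dim \hat R/\fp\hat R$, and the localization isomorphism $(M \otimes_R \hat R)_\fq \cong M_\fp \otimes_{R_\fp} \hat R_\fq$ for $\fp = \fq \cap R$. The only cosmetic difference is that the paper fixes a single $\fp$ realizing $\Dim_R M$ while you phrase it as a supremum over all $\fp \in \Supp_R M$, and you make the going-down reasoning for $\fq \cap R = \fp$ explicit where the paper simply declares it clear.
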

 \begin{proof} 
   Let $ \Dim_R(M) =d$. There exists $\fp \in \Supp_R(M)$ such that  $  d = \Dim R/\fp = \Dim \hat{R}/\fp\hat{R}$. Thus there exists $\fq \in \Spec(\hat{R})$ such that $\fq$ is minimal over $\fp\hat{R}$ and $\Dim \hat{R}/\fq\hat{R} = d$. We show that $\fq \in \Supp_{\hat{R}} (M \otimes_R \hat{R})$ and so $ \Dim_{\hat{R}}(M \otimes_R \hat{R}) \geq d$. It is clear that $\fq \cap R = \fp$. Hence the natural map $R_\fp \to \hat{R}_{\fq}$ is faithfully flat. Thus $$   (  M \otimes_{R} \hat{R}) \otimes_{\hat{R}} \hat{R}_{\fq} \cong M \otimes_{R} \hat{R}_{\fq} \cong M \otimes _{R} (R_\fp \otimes_{R_\fp} \hat{R}_{\fq}) \cong  (  M \otimes_{R} R_\fp) \otimes_{R_\fp} \hat{R}_{\fq} .$$  So $(  M \otimes_{R} \hat{R})_{\fq} \neq 0$ as desired.  
   
    On the other hand let $ \Dim_{\hat{R}}(M \otimes_R \hat{R})  = c$. Thus there exists $\fq \in \Supp_{\hat{R}}(M \otimes_R \hat{R})$ such that $\Dim \hat{R}/\fq\hat{R} = c$.
  Let $\fq \cap R = \fp$. Thus $\Dim R/\fp = \Dim \hat{R}/\fp\hat{R} \geq \Dim \hat{R}/\fq = c$  . So we only need to show that $ \fp \in \Supp(M)$. It is obvious by the isomorphism $(M\otimes_{R} \hat{R})_{\fq} \cong ( M \otimes_{R} R_\fp) \otimes_{R_\fp} \hat{R}_{\fq}$.  
 \end{proof}  
 \begin{proposition}\label{9}
 Let $(R,\fm)$ be  a regular local ring of dimension $n$ containing a field and $\I$ be an ideal of $R$ such that $\Ht_R(\I) = d$.  Then $\injdim_R(H_{\I}^d(R)) = \Dim_R(H_{\I}^d(R)) $.
 \end{proposition}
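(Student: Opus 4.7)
My plan is to show $\dim_R H_\I^d(R) = n - d$ and then establish $\injdim_R H_\I^d(R) = n - d$ by combining Lyubeznik's upper bound with a lower bound proved by induction.

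For the dimension, $\Supp H_\I^d(R) \subseteq V(\I)$, and any $\fp \in \Supp H_\I^d(R)$ satisfies $H_{\I R_\fp}^d(R_\fp) \neq 0$, forcing $\Ht \fp \geq d$ and therefore $\dim R/\fp \leq n - d$. Conversely, any minimal prime $\fp$ of $\I$ with $\Ht \fp = d$ (which exists since $\Ht \I = d$) makes $\I R_\fp$ a $\fp R_\fp$-primary ideal, so $H_\I^d(R)_\fp = E_{R_\fp}(k(\fp)) \neq 0$ with $\dim R/\fp = n - d$. Thus $\dim_R H_\I^d(R) = n - d$, and the upper bound $\injdim_R H_\I^d(R) \leq n - d$ is Lyubeznik's theorem (Remark \ref{1}(a) in characteristic $0$, Remark \ref{3}(d) in characteristic $p$).

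For the lower bound I would induct on $n - d$. The base case $n - d = 0$ is immediate: then $\I$ is $\fm$-primary and $H_\I^d(R) = E(R/\fm)$ has injective dimension zero. For the inductive step with $n - d \geq 1$, pick a minimal prime $\fp$ of $\I$ of height $d$ and a prime $\fq$ with $\fp \subseteq \fq$ and $\Ht \fq = n - 1$ (available since $\dim R/\fp = n - d \geq 1$). The localization $R_\fq$ is a regular local ring of dimension $n - 1$ containing a field, $\I R_\fq$ has height $d$, and $M_\fq := H_{\I R_\fq}^d(R_\fq)$ has $R_\fq$-dimension $(n-1) - d < n - d$. The inductive hypothesis applied inside $R_\fq$ yields $\injdim_{R_\fq} M_\fq = (n-1) - d$, and Lemma \ref{7} applied to $M_\fq$ inside $R_\fq$ forces the top Bass number to live only at the maximal ideal $\fq R_\fq$, giving $\mu^{(n-1)-d}(\fq R_\fq, M_\fq) \neq 0$. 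By localization of Bass numbers this produces $\mu^{(n-1)-d}(\fq, H_\I^d(R)) \neq 0$ in $R$.

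The main obstacle is then the final step: bridging the non-vanishing Bass number at the non-maximal prime $\fq$ of height $n - 1$ and level $(n-1) - d$ to a non-vanishing one at $\fm$ and level $n - d$. Classically, this is Bass's going-up theorem for Bass numbers, valid for finitely generated modules; since $H_\I^d(R)$ is typically not finitely generated, I would invoke the analog for $\mathscr{F}$-finite modules in characteristic $p$ or for holonomic $\mathscr{D}$-modules in characteristic $0$, whose finite Bass numbers (Remarks \ref{2}(f), \ref{4}(d)) and closure of the ambient abelian subcategory under subobjects, quotients, and extensions (Remarks \ref{2}(d), \ref{4}(a)) admit a chain-of-primes argument. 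An alternative is a change-of-rings reduction: choose $x \in \fm$ from a regular system of parameters with $\Ht (\I + xR)/xR = d$ in $R/xR$, apply the inductive hypothesis in $R/xR$, and use the change-of-rings Ext spectral sequence $\Ext_{R/xR}^p(R/\fm, \Ext_R^q(R/xR, H_\I^d(R))) \Rightarrow \Ext_R^{p+q}(R/\fm, H_\I^d(R))$ to propagate the non-vanishing, controlling the error coming from the kernel of multiplication by $x$ on $H_\I^{d+1}(R)$ via its own Lyubeznik-type bounds. Once $\mu^{n-d}(\fm, H_\I^d(R)) \neq 0$ is obtained, $\injdim_R H_\I^d(R) \geq n - d$ follows, matching the upper bound.
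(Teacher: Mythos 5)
Your dimension computation, your appeal to Lyubeznik's upper bound, and your base case are all fine, and your inductive setup (localize at a height-$(n-1)$ prime $\fq \supseteq \fp$ with $\fp$ a minimal prime of $\I$ of height $d$, apply the inductive hypothesis and Lemma~\ref{7} to get $\mu^{(n-1)-d}(\fq, H_\I^d(R)) \neq 0$) is sound. But the step you flag yourself --- passing from $\mu^{(n-1)-d}(\fq, H_\I^d(R)) \neq 0$ at a prime of height $n-1$ to $\mu^{n-d}(\fm, H_\I^d(R)) \neq 0$ --- is exactly where the proof stops being a proof. Bass's going-up for Bass numbers is a theorem about \emph{finitely generated} modules: its proof relies on Nakayama applied to $\Ext^i_R(R/\fp, M)$, and that module need not be finitely generated here. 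There is no version of this result for $\mathscr{F}$-finite or holonomic modules in the paper's toolkit (Remarks~\ref{2} and~\ref{4} supply finiteness of Bass numbers, not going-up), and you do not prove one; ``would invoke the analog'' and ``admit a chain-of-primes argument'' are gestures, not arguments. The alternative change-of-rings sketch has the same problem: you would need to identify $\Ext^q_R(R/xR, H_\I^d(R))$ with local cohomology over $R/xR$, and the long exact sequence produces an uncontrolled contribution from $(0:_{H_\I^{d+1}(R)} x)$ that you only wave at. What you actually establish by your method is $\injdim \geq n-d-1$, i.e. the inequality of Theorem~\ref{14}, not the equality claimed in Proposition~\ref{9}.

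The paper avoids this difficulty entirely. After a Mayer--Vietoris reduction to the case where every minimal prime of $\I$ has height $d$, it runs the composite-functor spectral sequence $H_\fm^i(H_\I^j(R)) \Rightarrow H_\fm^{i+j}(R)$ and uses Hartshorne--Lichtenbaum (plus Lyubeznik's bound) to show $\Dim_R H_\I^j(R) \leq n-(j+1)$ for $j > d$, so every other entry on the diagonal $i+j=n$ dies. Since $H_\fm^n(R) \neq 0$, the surviving term $E_\infty^{n-d,d}$, a subquotient of $H_\fm^{n-d}(H_\I^d(R))$, is nonzero, forcing $\injdim_R H_\I^d(R) \geq n-d$ directly. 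This is a single structural observation rather than a prime-by-prime climb and requires no going-up theorem.
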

 \begin{proof}
 Assume $\Ht_R(\I) =d$. Let $\min_R(I) = \{\fp_1,\ldots,\fp_s\} \cup \{\fq_1,\ldots,\fq_t\}$ such that $\Ht_R(\fp_i) =d $ and $\Ht_R(\fq_i) >d$. Set $\I^{\prime} := \fp_1 \cap \ldots \cap \fp_s$ and $\I^{\prime\prime} = \fq_1 \cap \ldots \cap \fq_t$.  We have the Mayer-vietoris sequence $$ H_{\I^{\prime} + \I^{\prime\prime}}^d(R) \to H_{\I^{\prime}}^d(R) \oplus H_{ \I^{\prime\prime}}^d(R) \to H_{\I }^d(R) \to H_{\I^{\prime} + \I^{\prime\prime}}^{d+1}(R).$$
 Since $H_{\I^{\prime} + \I^{\prime\prime}}^d(R) = H_{\I^{\prime} + \I^{\prime\prime}}^{d+1}(R) = H_{ \I^{\prime\prime}}^d(R) =0 $ we deduce that  $H_{\I}^d(R) \cong H_{I^{\prime} }^d(R)$. Thus without loss of generality,  we can assume that all minimal prime ideals of $\I$ have height $d$. 
 
 There   exists  the spectral sequence $H_{\fm}^i(H_{\I}^j(R)) \Rightarrow H_{\fm}^{i+j}(R)$.    By using Hartshorne-Lichtenbaum theorem, we easily see that  $ \injdim_R (H_{\I}^i(R)) \leq \Dim_R (H_{\I}^i(R)) \leq n- (i+1)$ for all $i > d$. So on the line $y+x = n $ of the spectral sequence $H_{\fm}^i(H_{\I}^j(R)) \Rightarrow H_{\fm}^{i+j}(R)$,  we have $(H_{\fm}^{n-i}(H_{\I}^i(R)) = 0$ for all $i >d $.  By the definition of the spectral sequence $H_{\fm}^i(H_{\I}^j(R)) \Rightarrow H_{\fm}^{i+j}(R)$ there exists a filtration $$0 \subseteq \ldots \subseteq	 F^tH_{n} \subseteq   F^{t-1}H_{n} \subseteq \ldots \subseteq F^sH_{n} = H_{\fm}^{n}(R)$$ of $H_{\fm}^{n}(R)$ such that $E_{\infty}^{i,n-i} \cong \frac{F^i H_{n}}{F^{i+1}H_{n}}$. Since $E_{\infty}^{n-d-i,d+i} = 0$ for all $i\geq 1$ then $E_{\infty}^{n-d,d} \cong H_{\fm}^n(R)$. Note that $E_{\infty}^{n-d,d} $ is the quotient of $ H_{\fm}^{n-d}(H_{\I}^d(R))$. Then $ H_{\fm}^{n-d}(H_{\I}^d(R))$ must be non-zero. It implies that 
 $ \Dim_R(H_{\I}^d(R)) =n-d \leq  \injdim_R(H_{\I}^d(R)) $.
\end{proof}
\begin{lemma}\label{10}
 Let $(R, \fm)$ be a regular  local ring of dimension $n$ which contains a field of characteristic $p>0$. Let $\fp$ be a prime ideal of $R$ such that $\Ht_R\fp =d < n-1$. Then $E_R(R/\fp) \cong H_\fp^d(R)_\fp$ with natural $\mathscr{F}$-module structure is not $\mathscr{F}$-finite.
\end{lemma}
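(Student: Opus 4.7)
The plan is to mirror the characteristic zero argument of Lemma \ref{5} in the Frobenius setting. Suppose for contradiction that $E := E_R(R/\fp) \cong H_\fp^d(R)_\fp$ is $\mathscr{F}$-finite. Observe that $\Ass_R E = \{\fp\}$ and $\Gamma_\fp(E) = E$, so $E$ is $\fp$-primary. The first step is to invoke an $\mathscr{F}$-finite counterpart of Remark \ref{2}(c) to produce an element $h \in R \setminus \fp$ such that $\Hom_R(R/\fp, E)_h$ is finitely generated as an $R_h$-module.

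Granting this, the rest of the argument is a direct translation of Lemma \ref{5}. Since $d \leq n - 2$, by \cite[Theorem 31.2]{Mat} we may pick a prime $\fq$ with $\fp \subsetneqq \fq$, $\Ht_R \fq = n - 1$, and $h \notin \fq$. Localizing further at $\fq$, the module
\[ M := \Hom_R(R/\fp, E)_\fq \cong \Hom_{R_\fq}(R_\fq/\fp R_\fq, E_\fq) \]
is a non-zero finitely generated $R_\fq$-module. At the same time, since $E_\fq = E$ remains the injective envelope of $R_\fq/\fp R_\fq$ over $R_\fq$ (using $(R_\fq)_{\fp R_\fq} = R_\fp$), the module $M$ carries the structure of an injective $R_\fq/\fp R_\fq$-module. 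Lemma \ref{15} now forces $R_\fq/\fp R_\fq$ to be Artinian, contradicting the fact that it is a domain of Krull dimension $(n - 1) - d \geq 1$.

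The main obstacle is establishing the $\mathscr{F}$-finite analog of Remark \ref{2}(c), which in the holonomic case is Puthenpurakal's \cite[Proposition 2.3]{puthen}. I expect this to follow from Lyubeznik's structure theory in \cite{L1}: every $\mathscr{F}$-finite module $N$ admits a finitely generated ``root'' $L$ with $N = \varinjlim F^i(L)$ along some map $L \to F(L)$; when $\Ass N = \{\fp\}$ and $N$ is $\fp$-torsion, one should be able to track the induced direct system on $\Hom_R(R/\fp, F^i(L))$ and use faithful flatness of the Frobenius on the regular ring $R$ to show that the transition maps stabilize after inverting a suitable $h \in R \setminus \fp$, yielding the required finite generation of $\Hom_R(R/\fp, N)_h$ over $R_h$. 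Once this auxiliary result is secured, the rest of the proof is essentially formal.
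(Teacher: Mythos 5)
Your proposal takes a genuinely different route from the paper, and it has a genuine gap. You try to transplant the argument of Lemma~\ref{5} to characteristic~$p$: assume $E_R(R/\fp)$ is $\mathscr{F}$-finite, produce $h\notin\fp$ with $\Hom_R(R/\fp,E_R(R/\fp))_h$ finitely generated over $R_h$, localize at a height-$(n-1)$ prime $\fq\supsetneq\fp$ avoiding $h$, and invoke Lemma~\ref{15}. The endgame of that argument is sound: $\Hom_R(R/\fp,E_R(R/\fp))$ is the fraction field $k(\fp)$ of $R/\fp$, and for $\Ht\fp\leq n-2$ no localization $k(\fp)_h$ is finitely generated over $R_h$, which is what the Artinian contradiction detects. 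The gap is the input step. The characteristic-$p$ analogue of Remark~\ref{2}(c) (Puthenpurakal's Proposition~2.3 in the holonomic setting) is neither proven nor cited; you yourself flag it as ``the main obstacle.'' The sketch via roots of $\mathscr{F}$-finite modules is plausible in outline, but it is not a known off-the-shelf statement, and showing that the direct system $\Hom_R(R/\fp,F^i(L))$ stabilizes after inverting a single $h\notin\fp$ would itself require a careful argument you have not supplied. As it stands the proof rests on an unverified lemma.

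The paper deliberately avoids this issue by using a completely different mechanism. It first proves Proposition~\ref{9} ($\injdim_R H_\fp^d(R)=\Dim_R H_\fp^d(R)$ for $\fp$ of height $d$) via a spectral-sequence argument. For $\Ht\fp=n-2$ it then takes the minimal injective resolution $0\to H_\fp^{n-2}(R)\to E_R(R/\fp)\to E^1\to E^2\to 0$, which by Remark~\ref{3}(c) is a complex of $\mathscr{F}$-modules, identifies $E^2\cong E_R(R/\fm)^s$ via Lemma~\ref{7}, and deduces by breaking into short exact sequences that $\mathscr{F}$-finiteness of $E_R(R/\fp)$ would force $E^1$ to be $\mathscr{F}$-finite. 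But Proposition~\ref{9} and Lemma~\ref{7} over $R_\fq$ show $\mu^1(\fq,H_\fp^{n-2}(R))>0$ for each of the infinitely many height-$(n-1)$ primes $\fq\supset\fp$, so $E^1$ would have infinitely many associated primes, contradicting Remark~\ref{4}(e). The case $\Ht\fp<n-2$ reduces to this one by localization (Remark~\ref{4}(f)). This argument uses only what is already established in the paper and in \cite{L1}, whereas your route needs a char-$p$ structure statement that would itself require a nontrivial proof.
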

\begin{proof}
Note that $E_R(R/\fp) \cong H_\fp^d(R)_\fp$ and by Remark \ref{3}(e), $E_R(R/\fp)$ has a natural $\mathscr{F}$-module structure.
 
First assume that $\Ht_R(\fp) = n-2$. By virtue of Proposition \ref{9}, $\injdim_R H_{\fp}^{n-2}(R) = 2$. Consider the following minimal injective resolution of $ H_{\fp}^{n-2}(R)$.
$$0 \to H_{\fp}^{n-2}(R) \to E_R(R/\fp) \to E^1 \to E^2 \to 0.$$
By Remark \ref{3} (c), this is a complex of $\mathscr{F}$-modules and $\mathscr{F}$-homomorphisms. In view of Lemma \ref{7} and Remark \ref{4} (d), $E^2 \cong E_R(R/\fm)^s$ where $s$ is a positive integer. Suppose on the contrary $E_R(R/\fp)$ is $\mathscr{F}$-finite. Then following Remark \ref{4} (a), $E^1$ must be $\mathscr{F}$-finite. There exist infinitely many primes $\fq \in \Spec(R)$ which $ \fp \subset \fq$ and $\Ht_R(\fq) = n-1$. For all such $\fq \in \Spec(R)$, in view of Proposition \ref{9}, $\injdim_{R_\fq} H_{{\fp}R_{\fq}}^{n-2}(R_{\fq}) = 1$ and considering Lemma \ref{7} we have $\mu^1(\fq , H_{\fp}^{n-2}(R)) >0$.   So we reach to a contradiction in view of Remark \ref{4} (d) , (e).

For the convenience of the reader, we bring a different proof of the fact that $\mu^1(\fq , H_{\fp}^{n-2}(R)) >0$ suggested by the referee. Suppose $\fq \supseteq \fp$ such that $\Ht_R(\fq) = n-1$. Claim $E_{\fq}^1 \neq 0$.

Suppose if possible $ E_{\fq}^1 = 0$. We have $H_{{\fp}R_{\fq}}^{n-2}(R_{\fq})$ is an injective $  R_{\fq}$-module. Choose $g$ such that $(\fp R_{\fq},g)$ is $\fq R_{\fq}$-primary. By using the standard long-exact sequence of local cohomology modules and Hartshorne-Lichtenbaum theorem, we have an exact sequence 
$$ 0 \to H_{{\fp}R_{\fq}}^{n-2}(R_{\fq}) \to (H_{{\fp}R_{\fq}}^{n-2}(R_{\fq}))_g \to H_{{\fq}R_{\fq}}^{n-1}(R_{\fq}) \to 0.$$
As $H_{{\fp}R_{\fq}}^{n-2}(R_{\fq})$ is an injective $  R_{\fq}$-module we get that $\fq  R_{\fq} \in \Ass _{R_{\fq}}(H_{{\fp}R_{\fq}}^{n-2}(R_{\fq}))_g$ which is a contradiction.

 Now suppose $\Ht_R(\fp) =  n-3 $. Let $\fq \in \Spec(R)$ such that $\Ht_R(\fq) = n-1$ and $\fp \subset \fq$. Suppose on the contrary that $E_R(R/\fp)$ is $\mathscr{F}$-finite. Thus $E_R(R/\fp)_{\fq}$ is $\mathscr{F}_{R_\fq}$-finite by Remark \ref{4} (f). This contradicts with the first step of the proof.
 
 By applying this argument for a finite step we prove the lemma.   
\end{proof}
\begin{remark}\label{11}
\begin{itemize}
\item[(i)] 
Adopt the above  notations of Lemma \ref{10}. Let $\fp$ be a prime ideal of $R$ such that $\Ht(\fp) \geq n-1$. Then it is easy to see that $E_R(R/\fp)$ is $\mathscr{F}$-finite. Indeed if $\fp = \fm$ then $E_R(R/\fm) = H_{\fm}^n(R)$. Otherwise let  $$0 \to H_{\fp}^{n-1}(R) \to E_R(R/\fp) \to E^1 \to  0$$ be the minimal injective resolution of $H_{\fp}^{n-1}(R)$. In view of Lemma \ref{7} and Remark \ref{4} (d), $E^1 \cong E_R(R/\fm)^s$ where $s$ is a positive integer . Thus by Remark \ref{4}$(a)$, $ E_R(R/\fp)$ is $\mathscr{F}$-finite.
\item[(ii)] Let $R = k[[x_1,\ldots,x_n]]$ and characteristic of $k$ is $0$. Let $\fp$ be a prime ideal of $R$ such that $\Ht(\fp) \geq n-1$. As $(i)$ one can easily see that $E_R(R/\fp)$ is holonomic.
\end{itemize}   
\end{remark}

Let $M$ be a finitely generated module over a Cohen-Macaulay ring $R$ such that $\injdim_{R}(M)$ is finite and therefore it equals to $\Dim R$. Then it is elementary to prove that if $\mu^{\Dim R}(\fp,M) >0$ then $\fp$ is a maximal ideal in $R$, use \cite[Proposition 3.1.13]{BH}. Although this fact is not true for $R$-module $M$ that is not finitely generated. For example Let $\fp$ be a prime ideal of $R$ and $M$ be the injective envelope of $R/\fp$.

For polynomial ring $R = k[x_1,\ldots,x_n]$ with field $k$ of characteristic zero, Puthenpurakal proved if $\injdim_R(H_{\I}^{i}(R)) = c$ and $\mu^c(\fp,H_{\I}^{i}(R)) >0$ for prime ideal $\fp$ of $R$, then $\fp$ is a maximal ideal of $R$, see \cite[Theorem 1.1]{puthen}. In the following proposition we generalize his theorem to the case that  $R$ is a  regular  local ring which contains a field. 
\begin{proposition}\label{12}
Let $R$ be a regular local ring of dimension $n$ which contains a field $k$.  Let $M$ be an $R$-module  such that  $\injdim_R(M) = c$ and  $\mu^c(\fp,M) \neq 0$ for  a prime ideal $\fp$ of $R$ . Assume that one of the following holds:
\begin{itemize}
\item[(i)] $k$ is a field of characteristic $p>0$ and $M$ be a $\mathscr{F}$-finite.
\item[(ii)] $R = k[[x_1,\ldots,x_n]]$ and characteristic of $k$ is $0$ and $M$ is a holonomic module.
\item[(iii)] $k$ is a field of characteristic $0$ and $M = H_I^j(R)_f$ where $I$ is an ideal of $R$ and $f \in R$.
\end{itemize}
 Then 
   $\Ht_R(\fp) \geq n-1$. 
\end{proposition}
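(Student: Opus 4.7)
The plan is to argue by contradiction: assume $d := \Ht_R(\fp) \leq n-2$ and realize a copy of $E_R(R/\fp)$ as an $\mathscr{F}$-finite module (case (i)) or a holonomic $\mathscr{D}$-module (cases (ii) and, after completion, (iii)), contradicting Lemma \ref{10}, Lemma \ref{5}, or Lemma \ref{6} respectively. I would proceed by strong induction on $n = \dim R$ (trivial for $n \leq 1$), split into a reduction step for $d \leq n-3$ and a direct analysis of the tight case $d = n-2$.

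\emph{Reduction ($d \leq n-3$).} Choose a prime $\fq \supsetneq \fp$ with $\Ht_R \fq = d+2 \leq n-1$ and localize at $\fq$. The localization $M_\fq$ stays in the category: Remark \ref{4}(f) covers case (i), while Remark \ref{2}(b) together with the identity $H_I^j(R)_f \otimes_R R_\fq = H_{IR_\fq}^j(R_\fq)_f$ covers cases (ii) and (iii). Because localization sends the minimal injective resolution of $M$ to an injective resolution of $M_\fq$, $\injdim_{R_\fq} M_\fq \leq c$; the identity $\mu^c(\fp R_\fq, M_\fq) = \mu^c(\fp, M) > 0$ forces equality. The induction hypothesis applied in $R_\fq$ (of dimension $d+2 < n$) yields $\Ht(\fp R_\fq) \geq d+1$, contradicting $\Ht(\fp R_\fq) = d$.

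\emph{Tight case ($d = n-2$).} Here the only prime of height $d+2 = n$ is $\fm$, so localization no longer reduces dimension; a direct argument modelled on the first step of the proof of Lemma \ref{10} is needed, and this is the main obstacle. By the Grothendieck spectral sequence
\[
E_2^{p,q} = \Ext_R^{p}(k(\fp), H^q_\fp(M)) \Longrightarrow \Ext_R^{p+q}(k(\fp), M),
\]
some $q \in \{0,\ldots,c\}$ satisfies $\mu^{c-q}(\fp, H^q_\fp(M)) > 0$. Setting $N := H^q_\fp(M)$, Remark \ref{4}(c) (respectively \ref{2}(e)) keeps $N$ in the category; its support lies in $V(\fp)$, $\fp \in \Supp N$ forces $\dim N = \dim R/\fp = 2$, and Remark \ref{3}(d) (respectively \ref{1}(a)) bounds $\injdim N \leq 2$. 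When $\injdim N = \dim N = 2$ and $c - q = 2$, the argument of Lemma \ref{7} (which uses only the bound $\injdim \leq \dim$ on localizations, hence extends to any module in the category) immediately forces $\fp$ to be maximal, contradicting $\Ht \fp = n-2$. The remaining configurations I would handle by analyzing the minimal injective resolution of $N$: using Remark \ref{11} (every $E_R(R/\fq)$ with $\Ht \fq \geq n-1$ is in the category) and the finiteness of associated primes of the intermediate cokernels (Remark \ref{4}(e); respectively \ref{2}(g)), one cascades the category membership along the resolution to realize $E_R(R/\fp)$ (in its natural structure) as a summand of a module in the category, contradicting Lemma \ref{10} (respectively \ref{5}).

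\emph{Case (iii).} Pass first to the completion $\hat{R} \cong k[[x_1,\ldots,x_n]]$ via Cohen's structure theorem: by flatness $M \otimes_R \hat{R} = H_{I\hat{R}}^j(\hat{R})_f$ is holonomic over $\hat{R}$ (Remark \ref{2}(b), (e)). Lemmas \ref{8} and \ref{16} transfer the dimension and finite-generation data along the faithfully flat map $R \to \hat{R}$; the tight-case direct argument is then run inside $\hat{R}$ with Lemma \ref{6} replacing Lemma \ref{5}. The main obstacle throughout is the tight case $d = n-2$: rigorously extracting $E_R(R/\fp)$ as a module in the category, since the intermediate terms of the minimal injective resolution of $N$ need not themselves be $\mathscr{F}$-finite or holonomic.
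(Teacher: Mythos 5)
Your proposal identifies the right target (realize a copy of $E_R(R/\fp)$ as a member of the category and contradict Lemmas \ref{5}, \ref{6}, \ref{10}), and your localization reduction for $d \leq n-3$ is sound, but the tight case $d = n-2$ --- which you yourself flag as ``the main obstacle'' --- is genuinely unresolved, and the route you sketch would not close the gap. The reason is that the intermediate terms $E^i$ and their cokernels in the minimal injective resolution of $N = H^q_\fp(M)$ are not $\mathscr{F}$-finite (resp.\ holonomic) in general: Remark \ref{3}(c) only endows each $E^i$ with an $\mathscr{F}$-\emph{module} structure, and Remark \ref{11} only covers $E_R(R/\fq)$ with $\Ht\fq\geq n-1$; the very content of Lemma \ref{10} is that for $\Ht\fp < n-1$ the module $E_R(R/\fp)$ is \emph{not} $\mathscr{F}$-finite, so it cannot appear as a summand of a module you know to be $\mathscr{F}$-finite without more work. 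Thus invoking Remark \ref{4}(e) or \ref{2}(g) on the intermediate cokernels is not available, and the ``cascade'' has no traction.

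The paper's actual proof is not an induction and avoids the injective resolution of $N$ entirely. Its key step is to use Lyubeznik's Bass-number identity $\mu^0(\fp, H^c_\fp(M)) = \mu^c(\fp, M) > 0$ (\cite[Lemma 1.4]{L}), which gives $\fp \in \Ass_R H^c_\fp(M)$ directly (stronger than the spectral-sequence argument you use, which only gives \emph{some} $q$). Then, with $\Ass_R H^c_\fp(M) = \{\fp,\fq_1,\ldots,\fq_m\}$ finite, one forms $Q := H^c_\fp(M)/\Gamma_{\fq_1\cdots\fq_m}(H^c_\fp(M))$, which stays in the category because the category is closed under quotients (Remark \ref{4}(a), Remark \ref{2}(d)) --- this is the move your cascade is groping for but cannot reach. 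The proof that $Q \cong H^c_\fp(M)_\fp \cong E_R(R/\fp)^s$ is the clever point: because $\injdim_R M = c$, the long exact sequence for $(\fp,g)$ gives $H^{c+1}_{(\fp,g)}(M) = 0$, so $H^c_\fp(M) \to H^c_\fp(M)_g$ is surjective for every $g \notin \fp$, which forces $Q = Q_g = Q_\fp$. Then $Q \cong E_R(R/\fp)^s$ is $\mathscr{F}$-finite (resp.\ holonomic, resp.\ goes to a holonomic module over $\hat{R}$), contradicting Lemma \ref{10} (resp.\ \ref{5}, \ref{6}). You should study this torsion-quotient construction --- it is the mechanism that replaces your cascade and makes the tight case work.

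A secondary issue with the reduction step: in case (ii) the induction hypothesis requires $R = k[[x_1,\ldots,x_n]]$, so after localizing at $\fq$ you are no longer in a power series ring and cannot invoke the hypothesis directly; you would need to first pass to the completion of $R_\fq$ and transport the holonomic structure there, as the paper does in the proof of Lemma \ref{6}. Since the paper's direct argument makes the whole inductive framework unnecessary, this wrinkle is moot, but it would need attention if one insisted on your scheme.
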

\begin{proof}
We first show that $H_{\fp}^i(M)_{\fp}$ is an  injective $R$-module for all positive integer $i$. In case $(i)$, $H_{\fp}^i(M)_{\fp}$ is zero or  an $\mathscr{F}_{R_\fp}$-finite module of dimension $0$, see \ref{4}(c),(f). Then by  \ref{3}(d) and \ref{4}(d) $H_{\fp}^i(M)_{\fp} \cong E_R(R/\fp)^s$ where $s$ is a positive integer. In case $(ii)$, 
we note that $H_{\fp}^i(M)$ is a holonomic $\mathscr{D}$-module, see Remark \ref{2}(a). Let ${R_\fp}\hat{}$ denote the completion of $R_\fp$ with respect to the maximal ideal $\fp R_\fp$. It follows that $H_{\fp}^i(M)_{\fp}$ has a natural structure of $\mathscr{D}({R_\fp}\hat{}, k^\prime)$-module where $k^\prime$ is a suitable coefficient field of $ {R_\fp}\hat{}$,  see the proof of \cite[Theorem 2.4(b)]{L}. So, by Remark \ref{1}(a),  $H_{\fp}^i(M)_{\fp}$ is a direct sum of  copies of $E_{{R_\fp}\hat{}}({R_\fp}\hat{}/\fp{R_\fp}\hat{})$. But as an $R$-module $E_{{R_\fp}\hat{}}({R_\fp}\hat{}/\fp{R_\fp}\hat{})$ is isomorphic to $E_R(R/\fp)$, so $H_{\fp}^i(M)_{\fp}$ is an injective $R$-module. Also $H_{\fp}^i(M)_{\fp}$ is a direct sum of  finite copies of $E_R(R/\fp)$, see Remark \ref{2}(f). In case $(iii)$,  $(H_{\fp}^i(H_{\I}^j(R)))_\fp \cong E_R(R/\fp)^s$ where $s$ is a positive integer, see \cite[Theorem 3.4(b),(d)]{L}. Then $$ H_\fp^i(M)_\fp = H_\fp^i(H_I^j(R)_f)_\fp \cong ((H_\fp^i(H_I^j(R))_f)_\fp \cong (H_{\fp}^i(H_{\I}^j(R)))_\fp \otimes_R R_f \cong E_R(R/\fp)^s \otimes_R R_f .$$ Hence   $(H_{\fp}^i(M))_\fp \cong E_R(R/\fp)^t$ where $t$ is a positive integer. So $(H_{\fp}^i(M))_\fp$ is an injective $R$-module. 

Therefore, in three cases, we have $ \mu^0(\fp,H_{\fp}^c(M)) = \mu^c(\fp,M) >0$, see \cite[Lemma 1.4]{L}. Note that by the above discussion $H_{\fp}^c(M)_{\fp} \cong E_R(R/\fp)^s$ where $s >0 $ is an  integer.

 Suppose on the contrary $\Ht_R(\fp) \leq n-2$. Note that $\Ass_R (H_{\fp}^c(M))$ is finite, see Remark \ref{4}(e), \ref{2}(f) and \cite[Theorem 3.4(c)]{L}. Let $\Ass_R (H_{\fp}^c(M)) = \{ \fp,\fq_1,\ldots,\fq_m\}$. Look at the exact sequence:
$$0 \to \Gamma_{\fq_1\ldots\fq_m}(H_{\fp}^c(M)) \to H_{\fp}^c(M) \to H_{\fp}^c(M)/\Gamma_{\fq_1\ldots\fq_m}(H_{\fp}^c(M)) \to 0.$$
Since $\fp \subsetneqq \fq_i$, we have $\fp \notin \Ass_R \Gamma_{\fq_1\ldots\fq_m}(H_{\fp}^c(M))$. Keep in mind that  $$\Ass_R H_{\fp}^c(M) = \Ass_R \Gamma_{\fq_1\ldots\fq_m}(H_{\fp}^c(M)) \cup \Ass_R H_{\fp}^c(M)/\Gamma_{\fq_1\ldots\fq_m}(H_{\fp}^c(M)).$$ It follows  that $\Ass_R H_{\fp}^c(M)/\Gamma_{\fq_1\ldots\fq_m}(H_{\fp}^c(M)) = \{\fp\}$.

Let $g \in R\setminus \fp$. Then the following diagram commutes:
 
\[\xymatrix{
0 \ar[r]& \Gamma_{\fq_1\ldots\fq_m}(H_{\fp}^c(M))  \ar[d] \ar[r] & H_{\fp}^c(M) \ar[r] \ar[d] & H_{\fp}^c(M)/\Gamma_{\fq_1\ldots\fq_m}(H_{\fp}^c(M)) \ar[d]^{\eta} \ar[r] & 0\\
0 \ar[r] & (\Gamma_{\fq_1\ldots\fq_m}(H_{\fp}^c(M)))_g \ar[r] &H_{\fp}^c(M)_g \ar[r]& (H_{\fp}^c(M)/\Gamma_{\fq_1\ldots\fq_m}(H_{\fp}^c(M)))_g   \ar[r] &0.
}\]
Recall that $\injdim_R M =c$. Thus,
  there is an exact sequence $$ H_{(\fp,g)}^c(M) \to H_{\fp}^c(M) \to H_{\fp}^c(M)_g \to H_{(\fp,g)}^{c+1}(M) =0. $$ Hence, the natural map $ \eta$ is surjective. As $g\notin \fp$, we get that $\eta$ is also injective. Thus, $H_{\fp}^c(M)/\Gamma_{\fq_1\ldots\fq_m}(H_{\fp}^c(M)) = (H_{\fp}^c(M)/\Gamma_{\fq_1\ldots\fq_m}(H_{\fp}^c(M)))_g$ for all $g \in R\setminus \fp$. It follows that $ H_{\fp}^c(M)/\Gamma_{\fq_1\ldots\fq_m}(H_{\fp}^c(M))= (H_{\fp}^c(M)/\Gamma_{\fq_1\ldots\fq_m}(H_{\fp}^c(M)))_\fp$.
 
 Note that $(\Gamma_{\fq_1\ldots\fq_m}(H_{\fp}^c(M)))_\fp =0 $. We deduce that  $$  H_{\fp}^c(M)_\fp \cong  ( H_{\fp}^c(M)/\Gamma_{\fq_1\ldots\fq_m}(H_{\fp}^c(M)))_\fp \cong H_{\fp}^c(M)/\Gamma_{\fq_1\ldots\fq_m}(H_{\fp}^c(M)).$$

Now we prove the proposition
\begin{itemize}
 \item[(i)] 
  Clearly $H_{\fp}^c(M)/\Gamma_{\fq_1\ldots\fq_m}(H_{\fp}^c(M))$ is $\mathscr{F}$-finite. Putting this along with $(H_{\fp}^c(M))_\fp \cong E_R(R/\fp)^s$, we conclude that $E_R(R/\fp)$ is $\mathscr{F}$-finite. So we reach to a contradiction because  by Lemma \ref{10} $E_R(R/\fp)$ can not be $\mathscr{F}$-finite.
  \item[(ii)]Exactly same $(i)$: $H_{\fp}^c(M)/\Gamma_{\fq_1\ldots\fq_m}(H_{\fp}^c(M))$ is holonomic and it is contradicts with Lemma \ref{5}.

\item[(iii)] Let $\hat{R}$ be the completion of $R$ with respect to maximal ideal $\fm$. Then   $$(H_{\fp\hat{R}}^c(H_{\I\hat{R}}^j(\hat{R})_f))/\Gamma_{(\fq_1\ldots\fq_m)\hat{R}} (H_{\fp\hat{R}}^c(H_{\I\hat{R}}^j(\hat{R})_f)) \cong E_R(R/\fp)^s \otimes_R \hat{R}.$$ But $E_R(R/\fp)^s \otimes_R \hat{R}$ is not holonomic by Lemma \ref{6} \end{itemize} 
\end{proof}
\begin{example}\label{13}
 Let $R = k[[x,y,z]]$ be a power series ring over a field $k$ and let $I$ be the ideal $(xy, xz)R$ of $R$. Then  $\Dim_R H_{\I}^i(R) = 1$ and $\injdim_R ( H_{I}^i(R)) =0$, see  \cite[Examples 2.9]{hel}. Thus there exists $\fp \in \Ass_R(H_{\I}^i(R))$ such that $\Ht_R(\fp) = 2 $. It is well-known that   for all $R$-module $M$, $\fq \in \Ass_R(M)$ if and only if $\mu^0(\fq ,M)>0$. It follows that  $\mu^0(\fp ,H_{\I}^i(R))>0$. Thus the lower bound for the prime ideal $\fp$ in the Proposition \ref{12} is not strict.
\end{example}

 \section{Main theorem}
 In this section we prove our main  result about injective dimension of local cohomology. 
 \begin{theorem}\label{14}
 Let $(R,\fm)$ be a regular local ring which contains a field . Let $\I$ be an ideal of $R$.  Suppose that one of the following two conditions $(i)$ or $(ii)$ holds:
 \begin{itemize}
 \item[(i)]   $R$ is of prime characteristic  $p >0$ and $M$ is an $\mathscr{F}$-finite module. 
 \item[(ii)] $R$ is of characteristic  $0$ and $M = H_{\I}^i(R)_{f} $ for some $f \in R$. 
 \end{itemize}
 Then $\dim_R M-1 \leq\injdim_R M$.
 \end{theorem}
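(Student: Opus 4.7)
The plan is a proof by contradiction, centered on Proposition~\ref{12} and the non-structure results Lemmas~\ref{10} and~\ref{5}. Set $c = \injdim_R M$ and $d = \dim_R M$; the desired bound $d - 1 \leq c$ is trivial when $d \leq 1$, so assume $d \geq 2$ and suppose for contradiction that $d \geq c + 2$. By finiteness of $\Ass_R M$---Remark~\ref{4}(e) in case~(i), and Remarks~\ref{2}(b),(d),(g) in case~(ii), where $M = H_\I^i(R)_f$ is a holonomic $\mathscr{D}$-module---pick $\fp \in \Ass_R M$ with $\dim R/\fp = d$, so $\Ht \fp = n - d \leq n - c - 2 < n - 1$.

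The target is an $R$-submodule embedding $E_R(R/\fp) \hookrightarrow M$. Once constructed, the closure of $\mathscr{F}$-finite modules under submodules (Remark~\ref{4}(a), case~(i)) and of holonomic $\mathscr{D}$-modules under $\mathscr{D}$-submodules (Remark~\ref{2}(d), case~(ii), after passing to the $\fm$-adic completion via Lemma~\ref{6}) transfers the class to $E_R(R/\fp)$ itself, contradicting Lemma~\ref{10} or Lemma~\ref{5} respectively, since $\Ht \fp < n - 1$.

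To produce the embedding, extend $R/\fp \hookrightarrow M$ essentially to $E_R(R/\fp) \hookrightarrow E^0$, the first term of the minimal injective resolution $0 \to M \to E^0 \to E^1 \to \cdots \to E^c \to 0$. Then $E_R(R/\fp) \subseteq M = \ker(E^0 \to E^1)$ provided the composite $\phi : E_R(R/\fp) \to E^0 \to E^1$ vanishes. Any associated prime of $\im \phi$ lies in $V(\fp) \cap \Ass_R E^1 = \{\fq \supseteq \fp : \mu^1(\fq, M) > 0\}$, so the task reduces to showing no such $\fq$ exists.

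The hard part is verifying this vanishing. I expect to proceed by induction on $c$: the base case $c = 0$ is immediate from Proposition~\ref{12}. For the inductive step $c \geq 1$, apply the induction hypothesis to the shifted syzygy $M^{(1)} = \coker(M \hookrightarrow E^0)$, which has $\injdim_R M^{(1)} = c - 1$ and remains in our class via Remarks~\ref{4}(a), \ref{2}(d) together with the $\mathscr{F}$-module structure on $E^0$ from Remark~\ref{3}(c); this yields $\dim_R M^{(1)} \leq c$, equivalently $\mu^1(\fq, M) > 0 \Rightarrow \Ht \fq \geq n - c$. The delicate final step is to combine this inequality with Proposition~\ref{12} (applied to $M$ or to its shifted syzygies iteratively) to rule out the remaining candidate $\fq \supseteq \fp$ of height in $[n-c,n]$; here one must verify in particular that $M^{(1)}$ genuinely lies in our class, which is subtle because $E^0$ need not be $\mathscr{F}$-finite unless all $\fq \in \Ass_R M$ already have $\Ht \fq \geq n - 1$.
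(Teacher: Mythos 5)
Your plan diverges from the paper's and, as you already suspect at the end, has a gap that I believe is fatal rather than merely delicate. You pick $\fp\in\Ass_R M$ of small height ($\Ht\fp\leq n-c-2<n-1$) and then want to run an induction on $c=\injdim_R M$ through the syzygy $M^{(1)}=\coker(M\hookrightarrow E^0)$. But under exactly the hypotheses you have arranged, $E^0$ contains $E_R(R/\fp)$ as a direct summand, and by Lemma~\ref{10} (resp.\ Lemma~\ref{5}) this envelope is \emph{not} $\mathscr{F}$-finite (resp.\ not holonomic) precisely because $\Ht\fp<n-1$. Since the $\mathscr{F}$-finite (resp.\ holonomic) modules are closed under submodules, $E^0$ therefore cannot be $\mathscr{F}$-finite (resp.\ holonomic) in the situation you are in, and nothing forces $M^{(1)}$ back into the class. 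In case (ii) of the theorem the situation is even worse: the class used in Proposition~\ref{12}(iii) is literally ``localizations of local cohomology of $R$,'' which is not closed under the cokernel construction at all. So the induction hypothesis simply cannot be applied to $M^{(1)}$, and the step that was supposed to produce the bound $\mu^1(\fq,M)>0\Rightarrow\Ht\fq\geq n-c$ does not go through.

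Even setting that aside, your target $E_R(R/\fp)\hookrightarrow M$ would force $M$ to contain an injective direct summand isomorphic to $E_R(R/\fp)$ (injectives split off), and your plan gives no mechanism to exclude primes $\fq\supseteq\fp$ with $\mu^1(\fq,M)>0$ in the height range $[n-c,n]$; Proposition~\ref{12} applied to shifted syzygies controls top Bass numbers $\mu^c$, not $\mu^1$. You flag this as ``the delicate final step,'' but there is no indicated route to close it.

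The paper's actual argument avoids all of this by inducting on $\dim_R M$ rather than on $\injdim_R M$, and by localizing at a prime $\fp$ with $\dim_{R_\fp}M_\fp=n-1$ (hence $\Ht\fp=n-1$). Localization does preserve the class (Remark~\ref{4}(f) for $\mathscr{F}$-finite; passing to $R_f$ and then $R_\fp$ in case (ii)). The induction hypothesis gives $\injdim_{R_\fp}M_\fp\geq n-2$; if equality holds, one finds some $\fq\subsetneq\fp$ (necessarily of height $\leq n-2<n-1$) with $\mu^{n-2}(\fq,M)\neq 0$ by a short localization/vanishing argument, and then a single application of Proposition~\ref{12} to $M$ forces $\injdim_R M\geq n-1$. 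The key structural difference is that the paper never needs to pass through a syzygy of the injective resolution, and so never leaves the class of modules to which the finiteness machinery applies.
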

 \begin{proof}
 We prove the theorem by induction on $\dim (M)$. If $\dim (M) \leq 1$, we have  nothing to prove.   In case $(i)$, assume that for every $\mathscr{F}$-finite module  of the dimension less  than $n$ the theorem is true. In case $(ii)$, assume that for every $R$ module   $\mathcal{N} = H_I^j(R)_g$   of dimension less  than $n$ the theorem is true such that $ g \in R$.
 
 Now suppose $M$ be an $R$-module of dimension $n >1$ which satisfies either $(i)$ or $(ii)$.

Let $\fp$ be a prime ideal  of $R$ such that $\dim_{R_\fp}(M)_\fp = n-1$. Then $M_\fp$ satisfies induction hypothesis. Hence $n-2 \leq \injdim_{R_\fp} (M)_\fp$. If $ \injdim_{R_\fp} (M)_\fp = n-1 $ we are done. So we assume $\injdim_{R_\fp} (M)_\fp = n-2$. We claim that there is a prime ideal $\fq \subsetneq \fp$ such that $\mu^{n-2}(\fq , M) \neq 0$. Suppose on the contrary there is not such prime ideal. Pick $g \in \fp$ such that $\dim_{R_\fp} (\mathcal(M)_g) = n-1$. Then $\mathcal(M)_g$ satisfies the induction hypothesis , see Remark \ref{4}$(b)$. But $\injdim_{R_\fp} \mathcal(M)_g < n-2$ and this  contradicts with the induction hypothesis. 

So there is a prime ideal $\fq \subsetneq \fp$ such that $\mu^{n-2}(\fq , M) \neq 0$. In view of Proposition \ref{12}$(i),(iii)$ we conclude that $n-1 \leq \injdim M$, as desired.
 \end{proof}
\begin{remark}
Note that in view of Example \ref{13}, the lower bound in the main theorem is not strict.
\end{remark}

\subsection*{Acknowledgment}
I would like to thank the anonymous referee for his/her detailed review.


\end{document}